\documentclass{article}
\usepackage{latexsym}
\usepackage{amsxtra}
\usepackage{amssymb}
\usepackage{amsthm}
\usepackage{xspace}

\usepackage{euscript}
\input xy
\xyoption{all} \CompileMatrices

\newtheorem{theorem}{Theorem}[section]
\newtheorem{proposition}[theorem]{Proposition}
\newtheorem{corollary}[theorem]{Corollary}
\newtheorem{lemma}[theorem]{Lemma}

\theoremstyle{definition}
\newtheorem{example}[theorem]{Example}

\newtheorem{problem}[theorem]{Problem}
\newtheorem{conjecture}[theorem]{Conjecture}

\begin{document}

\def\setmin{{-}}
\def\UU{{\mathcal U}}
\def\Z{{\mathcal Z}}
\def\PPP{{\mathcal P}}
\def\CCC{{\mathcal C}}
\def\XXX{{\mathcal X}}
\def\YYY{{\mathcal Y}}
\def\Mod{\operatorname{Mod}}
\def\Moore{\operatorname{Moore}}
\def\SStar{\operatorname{Semistar}}
\def\id{\operatorname{id}}
\def\Lin{\operatorname{Lin}}
\def\Spec{\operatorname{Spec}}
\def\Cl{\operatorname{Cl}}
\def\Idem{\operatorname{Idem}}
\def\Fix{\operatorname{Fix}}
\def\Cld{\operatorname{-Cl}}
\def\Inv{{\operatorname{Inv}}}
\def\Int{\operatorname{Int}}
\def\Pic{\operatorname{Pic}}
\def\SS{\operatorname{S}}
\def\U{{\textsf{\textup{U}}}}
\def\V{\operatorname{Inv}}
\def\Mor{\operatorname{Mor}}
\def\Int{{\textsf{\textup{Int}}}}
\def\I{{\textsf{\textup{I}}}}
\def\G{{\textsf{\textup{G}}}}
\def\K{{\textsf{\textup{K}}}}
\def\T{{\textsf{\textup{T}}}}
\def\R{{\textsf{\textup{R}}}}
\def\M{{\textsf{\textup{M}}}}
\def\FS{{\textsf{\textup{F}}}}
\def\Idl{{\textsf{\textup{Idl}}}}
\def\A{{\textsf{\textup{A}}}}
\def\C{{\textsf{\textup{C}}}}
\def\N{{\textsf{\textup{N}}}}
\def\Fp{{\mathcal F}}
\def\F{{\mathcal K}}
\def\S{\mathcal S}
\def\P{\operatorname{Prin}}
\def\B{\mathcal B}
\def\f{f}
\def\Inver{\operatorname{-Inv}}
\def\Prin{\textup{-}\mathcal{P}}
\def\Hom{\operatorname{Hom}}
\def\TM{t\textup{-Max}}
\def\TS{t\textup{-Spec}}
\def\Max{\operatorname{Max}}
\def\SSpec{\textup{-Spec}}
\def\SGV{\star\textup{-GV}}
\def\core{\operatorname{-core}}
\def\WBF{\operatorname{WBF}}
\def\sKr{\operatorname{sKr}}
\def\Loc{\operatorname{Loc}}
\def\Ass{\operatorname{Ass}}
\def\wAss{\operatorname{wAss}}
\def\ann{\operatorname{ann}}
\def\ind{\operatorname{ind}}
\def\cl{\star}
\def\ZZ{{\mathbb Z}}
\def\CC{{\mathbb C}}
\def\NN{{\mathbb N}}
\def\RR{{\mathbb R}}
\def\QQ{{\mathbb Q}}
\def\FF{{\mathbb F}}
\def\mm{{\mathfrak m}}
\def\nn{{\mathfrak n}}
\def\aaa{{\mathfrak a}}
\def\bbb{{\mathfrak b}}
\def\ppp{{\mathfrak p}}
\def\qqq{{\mathfrak q}}
\def\MM{{\mathfrak M}}
\def\qq{{\mathfrak Q}}
\def\rr{{\mathfrak R}}
\def\DD{{\mathfrak D}}
\def\cc{{\mathfrak S}}

\title{Semistar operations on Dedekind domains}
\date{\today} \author{Jesse Elliott \\ California State University, Channel Islands \\ One University Drive \\ Camarillo, CA 93012 USA \\ \texttt{jesse.elliott@csuci.edu}}

\maketitle

\begin{abstract}
We give an explicit description of the lattice $\SStar(D)$ of all semistar operations on any Dedekind domain $D$ from its set $\Max(D)$ of maximal ideals.  This descpription is constructive if $\Max(D)$ is finite.  As a corollary we show that $2^{{n \choose [n/2]}} \leq |\SStar(D)| \leq 2^{2^n}$  if $n = |\Max(D)|$ is finite; we compute $|\SStar(D)|$ if $|\Max(D)| \leq 7$; and we show that if $\Max(D)$ is infinite then $\SStar(D)$ has cardinality $2^{2^{|\Max(D)|}}$.

\ \\ 

\noindent {\bf Keywords:} semistar operation, Dedekind domain, principal ideal domain, closure operator, Moore family, multiplicative lattice

\noindent {\bf MSC:} 13F05, 13F10, 13A15, 13G05, 06A15, 06F05
\end{abstract}

\section{Introduction}

Let $D$ be an integral domain with quotient field $K$.  A {\it semistar operation on $D$} is a closure operation $\star$ on the lattice $\F(D)$ of all nonzero $D$-submodules of $K$ such that  $I^\star J^\star \subseteq (IJ)^\star$ for all $I,J \in \F(D)$ \cite{oka} \cite[Theorem 1.4]{ell}.  Semistar operations were introduced in \cite{oka} as a generalization of {\it star operations}, which were introduced by Krull in \cite[Section 6.43]{kru} in the guise of his {\it $'$-operations}.

An often-studied problem in the existing literature on semistar operations is to compute the cardinality of $\SStar(D)$.  It is clear that $D$ is a field if and only if $|\SStar(D)| = 1$, and it is well-known that $D$ is a discrete rank one valuation domain (DVR) if and only if $|\SStar(D)| = 2$ \cite[Theorem 48]{oka}. One also has the following.

\begin{theorem}\label{prufercounting}
Let $D$ be an integrally closed domain.
\begin{enumerate}
\item $\SStar(D)$ is finite if and only if $D$ is a finite dimensional Pr\"ufer domain such that the set $\Max(D)$ of its maximal ideals is finite \cite[(5.2)]{mat}. 
\item Suppose that $D$ is local and of finite dimension $n$.  Then $|\SStar(D)| \geq n+1$, with equality holding if and only if $D$ is a discrete valuation domain (of rank $n$); and $D$ is a valuation domain if and only if $|\SStar(D)| \leq 2n+1$ \cite[Theorems 3 and 4]{mat1}.
\end{enumerate}
\end{theorem}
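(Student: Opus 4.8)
The plan is to reduce the entire statement to two structural inputs: a decomposition of $\SStar(D)$ indexed by the overrings of $D$, and the classification of star operations on valuation domains. For the decomposition I would use the map $\star\mapsto D^\star$ from $\SStar(D)$ to the set of overrings of $D$. Each overring $T$ produces the semistar operation $e_T\colon I\mapsto IT$; distinct overrings give distinct $e_T$ since $D^{e_T}=T$; and the fiber over $T$ is in natural bijection with the set $\U(T)$ of semistar operations $\star'$ on $T$ fixing $T$ (the star operations on $T$, extended to all of $\F(T)$), via $\star'\mapsto(I\mapsto(IT)^{\star'})$. This yields $|\SStar(D)|=\sum_T|\U(T)|$ with $T$ ranging over overrings, and in particular the crude bound $|\SStar(D)|\ge|\{\text{overrings of }D\}|$; it also gives, for each prime $\ppp$, an embedding $\SStar(D_\ppp)\hookrightarrow\SStar(D)$ coming from the inclusion of overring-sets. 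I would pair this with the facts that a domain is a valuation domain exactly when its overrings are totally ordered, and that a valuation domain has at most two star operations, the identity $d$ and the divisorial closure $v$, with $v=d$ precisely when it is divisorial --- for finite rank, exactly when it is a discrete valuation domain.

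For part (1), the forward implication follows from the bound $|\SStar(D)|\ge|\{\text{overrings}\}|$ and the embedding above. If $D$ is integrally closed but not Pr\"ufer, some $D_\mm$ is not a valuation domain, hence has non-totally-ordered, and so infinitely many, overrings (a local integrally closed non-valuation domain has infinitely many semistar operations); by the embedding $\SStar(D)$ is then infinite. If $D$ is Pr\"ufer but infinite-dimensional, an infinite ascending chain of primes yields infinitely many localizations, hence infinitely many overrings; and if $\Max(D)$ is infinite the distinct $D_\mm$ already give infinitely many overrings. Conversely, if $D$ is Pr\"ufer, finite-dimensional, and $\Max(D)$ is finite, then $\Spec(D)$ is finite (each maximal ideal lies atop a chain of primes of length at most $\dim D$, the localizations being valuation domains), so $D$ has only finitely many overrings, each again Pr\"ufer with finite spectrum and hence with finitely many star operations; the decomposition then makes $\SStar(D)$ finite.

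For part (2), the lower bound is immediate from a maximal chain $0=\ppp_0\subset\ppp_1\subset\cdots\subset\ppp_n=\mm$: the localizations $D_{\ppp_0}=K,\dots,D_{\ppp_n}=D$ are $n+1$ distinct overrings, so $|\SStar(D)|\ge n+1$. Since $|\SStar(D)|=\sum_T|\U(T)|\ge|\{\text{overrings}\}|\ge n+1$, equality $|\SStar(D)|=n+1$ forces both that these $n+1$ localizations are the only overrings and that every $\U(T)$ is a singleton; the former makes the overrings totally ordered, so $D$ is a valuation domain, and the latter forces $v=d$ on each localization, so $D$ is a discrete valuation domain of rank $n$, the reverse implication being the matching computation. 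For the valuation criterion I would compute, for a valuation domain $V$ of rank $n$, that $|\SStar(V)|=\sum_{i=0}^{n}|\U(V_{\ppp_i})|$ with $V_{\ppp_i}$ of rank $i$: the rank-zero term (the field $K$) contributes $1$ and each of the $n$ positive-rank terms contributes at most $2$, namely $d$ and possibly $v$, giving $|\SStar(V)|\le 1+2n=2n+1$. The converse, that a local integrally closed domain of dimension $n$ which is not a valuation domain has $|\SStar(D)|>2n+1$, again exploits the failure of $\F(D)$ to be a chain.

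The main obstacle is the valuation-domain input feeding both parts: establishing that a valuation domain has no star operations beyond $d$ and $v$, and pinning down the divisorial/discrete dichotomy that separates a contribution of $1$ from $2$ in the sum. Equally delicate is the quantitative converse in part (2) --- showing that abandoning the valuation hypothesis strictly exceeds the budget $2n+1$ --- together with the claim that a non-valuation local integrally closed domain has infinitely many overrings, on which the ``only if'' half of part (1) rests. These are exactly the places where control of the submodule lattice $\F(D)$ is required, and where I would rely on the cited structure theory for star operations on valuation and Pr\"ufer domains.
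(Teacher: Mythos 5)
The paper does not actually prove this theorem: it is quoted as background, with the two parts attributed to Matsuda via the citations \cite[(5.2)]{mat} and \cite[Theorems 3 and 4]{mat1}, so there is no in-paper argument to compare yours against. Your skeleton --- fibering $\SStar(D)$ over the overrings of $D$ via $\star \longmapsto D^\star$ to get $|\SStar(D)| = \sum_T |\U(T)|$, then using that the overrings of a rank-$n$ valuation domain are exactly the $n+1$ localizations at primes and that each positive-rank one contributes $1$ or $2$ to the sum --- is the standard route to these counts and is sound in outline; the lower bound $n+1$, the ``equality iff discrete'' dichotomy, and the bound $2n+1$ for valuation domains all fall out of it correctly.

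As a standalone proof, however, the proposal has genuine gaps, and they sit exactly where you defer to ``the cited structure theory,'' which is circular here since those citations \emph{are} the theorem being proved. Concretely: (i) the converse half of part (2) --- that a local integrally closed domain of dimension $n$ which is not a valuation domain has $|\SStar(D)| > 2n+1$ --- is asserted but never argued; this is the hardest step of Matsuda's Theorem 4, and ``the failure of $\F(D)$ to be a chain'' does not by itself manufacture the $n+1$ extra semistar operations needed to beat the budget. (ii) The finiteness direction of part (1) requires $|\U(T)| < \infty$ for every overring $T$ of a finite-dimensional semilocal Pr\"ufer domain; you reduce this to ``finitely many star operations on a Pr\"ufer domain with finite spectrum'' without proof, and one must also justify identifying $\U(T)$ (semistar operations on all of $\F(T)$ fixing $T$) with star operations on fractional ideals, since a priori a semistar operation is not determined by its restriction to fractional ideals. (iii) The parenthetical claim that a local integrally closed non-valuation domain has infinitely many overrings is itself a nontrivial theorem (an integrally closed domain with only finitely many overrings is Pr\"ufer with finite spectrum) and needs a citation or proof. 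None of these is a wrong turn --- the architecture is the right one --- but they constitute the substantive content of the result rather than routine details.
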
 

Note that the set $\SStar(D)$ of all semistar operations on $D$ is not just a set: it is naturally partially ordered by the relation $\leq$, where $\star_1 \leq \star_2$ if $I^{\star_1} \subseteq I^{\star_2}$ for all $I \in \F(D)$.  The partially ordered set (poset) $\SStar(D)$ is a complete lattice, and one has
$$I^{\bigwedge \Gamma} = \bigcap\{I^\star:  \star \in \Gamma\},$$ $$I^{\bigvee \Gamma} = \bigcap\{J \in \F(D): J \supseteq I \mbox{ and } \forall \star \in \Gamma \ (J^\star = J)\},$$ for all $\Gamma \subseteq \SStar(D)$ and all $I \in \F(D)$ \cite[Proposition 13.6]{ell}.   In Theorem \ref{dedthm} and Corollary \ref{dedcor1} we give an explicit description of the lattice $\SStar(D)$ of all semistar operations on any Dedekind domain $D$ in terms of the set $\Max(D)$ of its maximal ideals.

For $\Max(D)$ finite our description of the lattice $\SStar(D)$ is constructive and yields the following theorem.   Let $J \in \F(D)$.  For all $I \in \F(D)$ we let $I^{v(J)} = (J:_K (J:_K I))$.   The operation $v(J)$ on $\F(D)$ is a semistar operation on $D$ called {\it divisorial closure on $D$ with respect to $J$} \cite[Example 1.8(a)]{pic}.  Note that $v(D)$ is the {\it divisorial closure} semistar operation $v: I \longmapsto (I^{-1})^{-1}$ on $D$.  Also, for any set $X$ we let $2^X$ denote the power set of $X$, partially ordered by inclusion.  

\begin{theorem}\label{pid}
Let $D$ be a PID such that $\Max(D)$ is finite.
\begin{enumerate}
\item The lattice $\SStar(D)$ is isomorphic to the lattice $\C(2^{\Max(D)})$ of all closure operations on the complete lattice $2^{\Max(D)}$.  Explicitly, the map $$\C(2^{\Max(D)})   \longrightarrow  \SStar(D)$$ acting by
$$* \longmapsto \bigwedge\left\{v(I): I \in \F(D) \mbox{ and }\{\ppp \in \Max(D): I D_\ppp = K\} \in (2^{\Max(D)})^*\right\}$$
is a poset isomorphism; alternatively, one has $* \longmapsto \star$, where $\star$ is the largest semistar operation on $D$ such that $I \in \F(D)$ is $\star$-closed if $\{\ppp \in \Max(D): I D_\ppp = K\}$ is $*$-closed in $2^{\Max(D)}$.
\item The lattice $\SStar(D)$ is anti-isomorphic to the subposet $$\Moore(2^{\Max(D)}) = \left\{\YYY \subseteq 2^{\Max(D)} : \YYY \mbox{ is closed under arbitrary intersections}\right\}$$ of $2^{2^{\Max(D)}}$.  
Explicitly, the map $$\SStar(D) \longrightarrow \Moore(2^{\Max(D)})$$ acting by
$$\star \longmapsto \{ \{\ppp \in \Max(D): I^\star D_\ppp = K\}: I \in \F(D) \} $$
is a poset anti-isomorphism with inverse acting by
$$\YYY \longmapsto \bigwedge\left\{v(I): I \in \F(D) \mbox{ and }\{\ppp \in \Max(D): I D_\ppp = K\} \in \YYY\right\}.$$
\end{enumerate}
\end{theorem}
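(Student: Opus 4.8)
The plan is to work throughout with the concrete model of $\F(D)$ available for a PID with finite $\Max(D)$ and to reduce everything to the combinatorics of $2^{\Max(D)}$. Write $X = \Max(D)$ and, for $I \in \F(D)$, set $Z(I) = \{\ppp \in X : ID_\ppp = K\} \in 2^X$. Since each $D_\ppp$ is a DVR and $X$ is finite, $I \longmapsto (v_\ppp(I))_{\ppp \in X}$ identifies $\F(D)$ with $(\ZZ \cup \{-\infty\})^X$ under reverse pointwise order, with $v_\ppp(I) = -\infty$ exactly when $\ppp \in Z(I)$; under this identification $Z$ is surjective, $Z(xI) = Z(I)$ for $x \in K^\times$, and $Z$ carries intersections to intersections. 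The first two facts I will establish are the linchpins: (a) multiplication by principal fractional ideals is compatible with any semistar operation, i.e.\ $(xI)^\star = x I^\star$ for all $x \in K^\times$ — this follows formally from $\star$ being a closure operation with $I^\star J^\star \subseteq (IJ)^\star$, applied to $J = xD$ and to $J = x^{-1}D$; and (b) because $D$ is a PID with $X$ finite, any two modules with the same $Z$-set lie in a single $K^\times$-orbit, since if $Z(I) = Z(J)$ then $J = xI$ for $x = \prod_{\ppp \notin Z(I)} \pi_\ppp^{\,v_\ppp(J) - v_\ppp(I)} \in K^\times$, where $\pi_\ppp$ is a generator of $\ppp$. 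I expect (b) to be where the PID and finiteness hypotheses genuinely enter, and it is the crux of the whole argument.

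Combining (a) and (b), the fixed set $\Fix(\star) = \{N \in \F(D) : N^\star = N\}$ is invariant under $K^\times$-scaling and is therefore a union of full $Z$-level sets; hence $\Fix(\star) = \{N : Z(N) \in \YYY_\star\}$, where $\YYY_\star = \{Z(N) : N \in \Fix(\star)\} = \{Z(I^\star) : I \in \F(D)\}$ is exactly the subset of $2^X$ in part (2). Next I check $\YYY_\star \in \Moore(2^X)$: it contains $X = Z(K)$, and since $\Fix(\star)$ is closed under nonzero intersections, $Z$ commutes with intersection, and $2^X$ is finite, $\YYY_\star$ is closed under intersection. Because a closure operation is recovered from its fixed set via $I^\star = \bigcap\{N \in \Fix(\star) : N \supseteq I\}$, the assignment $\star \longmapsto \YYY_\star$ is injective; it is clearly order-reversing, since $\star_1 \leq \star_2$ forces $\Fix(\star_2) \subseteq \Fix(\star_1)$ and hence $\YYY_{\star_2} \subseteq \YYY_{\star_1}$.

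For surjectivity I will analyze the divisorial closures. A direct valuation computation with the operation $(\,:_K\,)$, treating $Z(J) \subseteq Z(I)$ and $Z(J) \not\subseteq Z(I)$ separately (the latter forcing $(I :_K J) = 0$ and hence $J^{v(I)} = K$), yields $\Fix(v(I)) = \{N : Z(N) \in \{Z(I), X\}\}$; in particular $v(I)$ ``sees'' only the single set $Z(I)$. Given $\YYY \in \Moore(2^X)$, set $\star_\YYY = \bigwedge\{v(I) : Z(I) \in \YYY\}$, the candidate inverse from the statement. Since the fixed set of a meet of closure operations is the Moore family generated by the union of their fixed sets, and since $\{N : Z(N) \in \YYY\}$ is already intersection-closed (as $\YYY$ is) and equals $\bigcup_{T \in \YYY}\{N : Z(N) \in \{T,X\}\}$ because $X \in \YYY$, I obtain $\Fix(\star_\YYY) = \{N : Z(N) \in \YYY\}$ and therefore $\YYY_{\star_\YYY} = \YYY$. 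This shows $\star \longmapsto \YYY_\star$ and $\YYY \longmapsto \star_\YYY$ are mutually inverse order-reversing bijections, proving part (2) together with its explicit inverse.

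Finally, part (1) follows by composing with the standard anti-isomorphism $\C(2^X) \to \Moore(2^X)$, $* \longmapsto (2^X)^* = \Fix(*)$, which is order-reversing because larger closure operations have fewer closed sets. The composite $* \longmapsto \star_{\Fix(*)} = \bigwedge\{v(I) : Z(I) \in (2^X)^*\}$ is then an order isomorphism, which is precisely the displayed map of part (1); and unwinding $\Fix(\star_{\Fix(*)}) = \{N : Z(N) \text{ is } *\text{-closed}\}$ gives the alternative description, namely that $\star_{\Fix(*)}$ is the largest semistar operation whose closed modules include every $I$ with $Z(I)$ being $*$-closed, since among semistar operations a larger fixed set corresponds to a smaller operation. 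The main obstacle throughout is step (b): once the single-orbit principle is in hand, the remainder is bookkeeping with $Z$-sets, the finiteness of $2^X$, and the standard fixed-set calculus for closure operations.
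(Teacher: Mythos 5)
Your argument is correct and establishes both parts with their explicit formulas, but it takes a noticeably more direct route than the paper, which obtains Theorem \ref{pid} by specializing general Dedekind-domain machinery. The paper first identifies $\F(D)$ with the near multiplicative lattice $\Z(\Max(D))$ (Proposition \ref{dedlemma}), characterizes the sets $\F(D)^\star$ as the subsets of $\Z(\Max(D))$ closed under the preorder $\preceq$ and under pointwise infima (Lemmas \ref{lem1} and \ref{lem2}, resting on the $(I:_K J)$-stability criterion of Proposition \ref{mprop}), and only then observes that for finite $\Max(D)$ such subsets are parametrized by Moore families via the embedding $\iota$ of Lemma \ref{dedembed}; the formula for the inverse is quoted from \cite[Proposition 13.8]{ell}. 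You instead exploit the finiteness and PID hypotheses at the outset through your single-orbit principle (b): two modules with the same set $Z(I)=\{\ppp : ID_\ppp=K\}$ differ by multiplication by an element of $K^\times$, so $\Fix(\star)$ is a union of $Z$-level sets. This is precisely the finite-case degeneration of the paper's condition ``$\XXX^\preceq\subseteq\XXX$'': when $\Max(D)$ is finite, the $\preceq$-equivalence classes in $\Z(\Max(D))$ are exactly your $K^\times$-orbits. Your explicit computation $\Fix(v(I))=\{N : Z(N)\in\{Z(I),\Max(D)\}\}$, combined with the fixed-set calculus for meets of closure operations, then yields surjectivity and the stated inverse without outside citation. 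What you gain is a short, self-contained proof of Theorem \ref{pid}; what you give up is scope, since the single-orbit principle fails when $\Max(D)$ is infinite (a module with $Z(I)=\emptyset$ and $v_\ppp(I)=-1$ at infinitely many $\ppp$ is not of the form $xD$), so your method cannot reproduce Theorem \ref{dedthm}, Corollary \ref{dedcor1}, or the cardinality results of Section \ref{sec:con}. One small point to make explicit: an arbitrary intersection of elements of $\Fix(\star)$ can be $(0)$ (e.g.\ $\bigcap_n \ppp^n$), so in verifying that $\YYY_\star$ is closed under arbitrary intersections you should note that finiteness of $2^{\Max(D)}$ reduces you to finite intersections, which stay nonzero, together with the empty intersection $\Max(D)=Z(K)\in\YYY_\star$ --- both of which your sketch gestures at but should state.
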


In Section \ref{sec:con} we obtain the following as a corollary of these constructions.



\begin{theorem}\label{mainthm} Let $D$ and $D'$ be Dedekind domains.
\begin{enumerate}
\item The lattices $\SStar(D)$ and $\SStar(D')$  are isomorphic if $\Max(D)$ and $\Max(D')$ have the same cardinality. 
\item  If $\Max(D)$ is infinite, then $|\SStar(D)|$ is equal to $2^{2^{|\Max(D)|}}$.   
\item  If $|\Max(D)| = n$ is finite, then $2^{{n \choose [n/2]}} \leq |\SStar(D)| \leq 2^{2^n}$, and $|\SStar(D)|$ is given for $n \leq 7$ as in Table 1.
\item If $|\Max(D)| = 2$, then $\SStar(D)$ is isomorphic to $2^{\{1,2,3\}}\setmin\{\{1\}\}$.
\item If $|\Max(D)| = 3$, then $\SStar(D)$ is anti-isomorphic to the lattice with Hasse diagram given as in \cite[Figure 1]{hig}.
\end{enumerate}
\end{theorem}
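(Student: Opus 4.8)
The plan is to reduce all five statements to the combinatorial descriptions established in Theorem~\ref{dedthm} and Corollary~\ref{dedcor1} (for arbitrary Dedekind domains) and in Theorem~\ref{pid} (for PIDs with finite $\Max(D)$), which identify $\SStar(D)$ with the lattice $\C(2^{\Max(D)})$ of closure operations on the Boolean lattice $2^{\Max(D)}$, equivalently realize it as anti-isomorphic to the Moore-family lattice $\Moore(2^{\Max(D)})$. Once this identification is in hand every assertion becomes a statement about the poset $2^{\Max(D)}$, which as a Boolean lattice depends only on the cardinality $|\Max(D)|$.

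For part (1), a bijection $\Max(D) \to \Max(D')$ (which exists precisely when the two cardinalities agree) induces a poset isomorphism $2^{\Max(D)} \to 2^{\Max(D')}$ of Boolean lattices; this in turn induces an isomorphism $\C(2^{\Max(D)}) \to \C(2^{\Max(D')})$ by conjugation, and hence $\SStar(D) \cong \SStar(D')$. For part (2) I would prove matching bounds on $|\Moore(2^{\Max(D)})|$. Writing $\kappa = |\Max(D)|$, the upper bound $|\SStar(D)| \le 2^{2^\kappa}$ is immediate, since a Moore family is a subset of $2^{\Max(D)}$ and $|2^{\Max(D)}| = 2^\kappa$. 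For the lower bound I would invoke the classical existence (Fichtenholz--Kantorovich, Hausdorff) of an independent family $\{Y_i\}_{i\in I} \subseteq 2^{\Max(D)}$ with $|I| = 2^\kappa$. Independence forces $\{Y_i\}$ to be an antichain in which no member is an intersection of the others, so for each subset $B$ of $\{Y_i\}$ the intersection closure $\langle B\rangle$ satisfies $\langle B\rangle \cap \{Y_i\} = B$; distinct $B$ thus give distinct Moore families, producing the required $2^{2^\kappa}$ of them.

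Part (3) is the finite analogue of the same dichotomy. The upper bound $|\SStar(D)| \le 2^{2^n}$ holds exactly as in part (2). For the lower bound I would take $A$ to be the middle layer of $2^{\Max(D)}$, i.e.\ all subsets of size $[n/2]$; by Sperner's theorem $|A| = {n \choose [n/2]}$, and since distinct equal-sized sets cannot have one be an intersection of the others, the assignment $B \mapsto \langle B\rangle$ embeds $2^A$ into $\Moore(2^{\Max(D)})$, giving the $2^{{n \choose [n/2]}}$ semistar operations claimed. The entries of Table~1 are then the exact values of $|\Moore(2^{\Max(D)})|$ for $n \le 7$, i.e.\ the number of Moore families (closure systems) on an $n$-element set, which I would read off from the known enumeration of this sequence.

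Parts (4) and (5) are finite verifications inside the identification of $\SStar(D)$ with $\C(2^{\Max(D)})$. For $n = 2$ I would enumerate by hand the closure operations on the four-element Boolean lattice $2^{\{1,2\}}$, confirm that there are exactly seven, and exhibit the order isomorphism onto $2^{\{1,2,3\}}\setmin\{\{1\}\}$. For $n = 3$ I would match the $61$-element lattice of Moore families on a three-element set against the Hasse diagram of \cite[Figure~1]{hig}, yielding the stated anti-isomorphism. The main obstacle here is computational rather than conceptual: I expect the delicate points to be confirming that the description of Theorem~\ref{dedthm} and Corollary~\ref{dedcor1} depends only on $\Max(D)$ as an abstract set (so that parts (1)--(2) apply to infinite $\Max(D)$ and to non-principal Dedekind domains), together with the large-scale enumeration underlying Table~1 and the explicit lattice identifications in (4)--(5), which must be carried out or cited rather than produced by a short argument.
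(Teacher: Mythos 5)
Your overall strategy is the right one, but it rests on a claim that is false in exactly the cases where parts (1) and (2) have content. You assert that Theorem \ref{dedthm} and Corollary \ref{dedcor1} ``identify $\SStar(D)$ with the lattice $\C(2^{\Max(D)})$'' for an arbitrary Dedekind domain. They do not: they identify $\SStar(D)$ (anti-isomorphically) with the poset $(2^{\Z(\Max(D))})^\dagger$ of $\dagger$-closed subsets of $\Z(\Max(D))$, and the comparison map $\iota\colon \Moore(2^{\Max(D)}) \to (2^{\Z(\Max(D))})^\dagger$ of Lemma \ref{dedembed} is only an \emph{embedding} in general; it is an isomorphism precisely when $\Max(D)$ is finite (for infinite $\Max(D)$ the family $\{f\}^\dagger$ for an $f$ taking the value $1$ infinitely often lies outside the image of $\iota$). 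Consequently your proof of (1) --- conjugating closure operations on $2^{\Max(D)}$ by a bijection $\Max(D)\to\Max(D')$ and then invoking $\SStar(D)\cong\C(2^{\Max(D)})$ --- breaks down when $\Max(D)$ is infinite, and your upper bound in (2), justified by ``a Moore family is a subset of $2^{\Max(D)}$,'' is likewise unsupported. The paper instead observes for (1) that $(2^{\Z(\Max(D))})^\dagger$ is built solely from the set $\Max(D)$ (your conjugation argument works verbatim at that level, since a bijection of the index sets induces an isomorphism of $\Z(-)$ respecting $\preceq$ and $\inf$), and for the upper bound in (2) it uses $|\Z(\Max(D))| = \aleph_0^{|\Max(D)|} = 2^{|\Max(D)|}$, whence $|\SStar(D)| \le 2^{2^{|\Max(D)|}}$. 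Your lower bound in (2) only needs the embedding of $\Moore(2^{\Max(D)})$ into $\SStar(D)$, so it survives; with these repairs your argument closes.

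Where your route genuinely differs from the paper's, it is sound. For the lower bound in (2) the paper counts ultrafilters on $\Max(D)$ inside the union-closed families, invoking the known count $2^{2^{|S|}}$ of ultrafilters on an infinite set $S$, while you use a Fichtenholz--Kantorovich independent family of size $2^{|\Max(D)|}$ and intersection closures; your injectivity claim does hold (if $Y_j$ equalled an intersection of other members it would be contained in one of them, contradicting $Y_j \cap Y_{i_0}^c \neq \emptyset$), so this is a valid alternative. For the lower bound in (3) the paper reads $2^{{n \choose [n/2]}} \le |\Moore(2^{\Max(D)})|$ off Alekseev's formula $|\Moore(2^{X_n})| = 2^{{n \choose [n/2]}f(n)}$ with $f(n)\ge 1$, whereas your middle-layer antichain argument proves it directly and elementarily --- a small improvement in self-containedness (though note you only need that the middle layer has ${n \choose [n/2]}$ elements, not Sperner's theorem). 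Finally, for (3)--(5) remember that the theorem concerns Dedekind domains, not just PIDs: you must either pass through part (1) and a PID with $n$ maximal ideals, or note that Lemma \ref{dedembed}(3) together with Theorem \ref{dedthm}(4) gives $\SStar(D)$ anti-isomorphic to $\Moore(2^{\Max(D)})$ for \emph{any} Dedekind domain with finite $\Max(D)$.
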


\begin{table}
\caption{Number of semistar operations on a Dedekind domain $D$}
\centering 
\begin{tabular}{c|c}
$|\Max(D)|$ & $|\SStar(D)|$ \\ \hline
$1$ & $2$ \\ 
$2$ & $7$ \\
$3$ & $61$ \\
$4$ & $2\ 480$ \\
$5$ & $1\ 385\ 552$ \\
$6$ & $75\ 973\ 751\ 474$ \\
$7$ & $14\ 087\ 648\ 235\ 707\ 352\ 472$
\end{tabular}
\end{table}

In the spirit of Theorem \ref{prufercounting} we make the following conjecture.

\begin{conjecture}
Let $D$ be an integrally closed domain such that $\Max(D)$ is finite.  Then $D$ is a principal ideal domain (PID) if and only if $|\SStar(D)|$ is equal to the number of subsets of $2^{\Max(D)}$ that are closed under arbitrary intersections.
\end{conjecture}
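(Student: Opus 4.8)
The plan is to prove the two implications separately, with the forward direction immediate from the results already in hand and essentially all of the work concentrated in the converse. For the forward implication, suppose $D$ is a PID with $\Max(D)$ finite. Then Theorem \ref{pid}(2) exhibits a poset anti-isomorphism $\SStar(D) \longrightarrow \Moore(2^{\Max(D)})$, so in particular $|\SStar(D)|$ equals the number of subsets of $2^{\Max(D)}$ that are closed under arbitrary intersections, which is exactly the asserted equality.

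For the converse, assume $D$ is integrally closed with $\Max(D)$ finite and $|\SStar(D)| = |\Moore(2^{\Max(D)})|$. The first step is a finiteness reduction: since $\Max(D)$ is finite the set $\Moore(2^{\Max(D)})$ is finite, hence $\SStar(D)$ is finite, and so by Theorem \ref{prufercounting}(1) the domain $D$ is a finite-dimensional Pr\"ufer domain with $\Max(D)$ finite, i.e.\ a finite-dimensional semilocal Pr\"ufer domain. Writing $\Max(D) = \{\ppp_1,\dots,\ppp_n\}$, each localization $D_{\ppp_i}$ is then a valuation domain of finite rank, and $D$ is a PID if and only if every $D_{\ppp_i}$ is a rank-one DVR (a semilocal Dedekind domain, having trivial Picard group, is automatically a PID). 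The problem is thus reduced to showing that the equality $|\SStar(D)| = |\Moore(2^n)|$ forces each $D_{\ppp_i}$ to be a rank-one DVR.

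To accomplish this I would first generalize the description of $\SStar(D)$ in Theorem \ref{dedthm} from the Dedekind case to the semilocal Pr\"ufer case, expressing $\SStar(D)$ as a lattice assembled from the local lattices $\SStar(D_{\ppp_i})$ amalgamated along the combinatorial data carried by $\Moore(2^{\Max(D)})$. In the Dedekind case each $\SStar(D_{\ppp_i})$ is the two-element lattice consisting of the identity operation and the top semistar operation, and this amalgamation collapses to $\Moore(2^{\Max(D)})$ exactly, recovering $|\SStar(D)| = |\Moore(2^n)|$. I would then establish a strict monotonicity statement for the resulting count: among all finite-dimensional semilocal Pr\"ufer domains with $n$ maximal ideals, $|\SStar(D)|$ is minimized precisely at the PIDs, with minimum value $|\Moore(2^n)|$. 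Concretely, replacing any rank-one DVR factor (for which $|\SStar(D_{\ppp_i})| = 2$) by any valuation domain that is not a rank-one DVR---which by Theorem \ref{prufercounting}(2) has $|\SStar(D_{\ppp_i})| \geq 3$---should strictly increase the global count. Equality in the hypothesis would then force every local factor to be a rank-one DVR, completing the converse.

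The principal obstacle is this local-to-global step, which the present paper establishes only for Dedekind domains: one must construct the full analog of Theorem \ref{dedthm} for arbitrary finite-dimensional semilocal Pr\"ufer domains and, harder still, prove that the amalgamation is \emph{strictly} monotone in the local data rather than merely nondecreasing, ruling out any accidental collapse in which a larger local lattice nonetheless yields the same global cardinality $|\Moore(2^n)|$. A secondary subtlety is that the local rings to be excluded include not only higher-rank valuation domains but also rank-one valuation domains that fail to be discrete; these have only finitely many semistar operations by Theorem \ref{prufercounting}(1), yet strictly more than two by Theorem \ref{prufercounting}(2), so the monotonicity argument must treat them uniformly. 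Because the required semilocal-Pr\"ufer generalization of Theorem \ref{dedthm} and its strict monotonicity are not available here, the statement is offered only as a conjecture.
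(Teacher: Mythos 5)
The statement you are addressing is presented in the paper only as a conjecture; the paper offers no proof of it, so there is nothing to compare your argument against on the paper's side. Your treatment of the forward direction is correct and is exactly what the paper's own machinery yields: if $D$ is a PID with $\Max(D)$ finite, Theorem \ref{pid}(2) gives a poset anti-isomorphism $\SStar(D) \longrightarrow \Moore(2^{\Max(D)})$, hence the asserted equality of cardinalities. (The same holds for any Dedekind domain with $\Max(D)$ finite, by Theorem \ref{mainthm}(1), which is consistent with the conjecture since a semilocal Dedekind domain is a PID.)

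For the converse you do not give a proof, and you say so explicitly; the review can only confirm that the gap you identify is real and is precisely why the statement remains a conjecture. Your reduction is sound as far as it goes: finiteness of $\Moore(2^{\Max(D)})$ plus Theorem \ref{prufercounting}(1) does place $D$ among the finite-dimensional semilocal Pr\"ufer domains, and it does suffice to show each $D_{\ppp_i}$ is a rank-one DVR. But the two ingredients you then invoke --- a semilocal-Pr\"ufer analogue of Theorem \ref{dedthm} expressing $\SStar(D)$ as an amalgamation of the local lattices $\SStar(D_{\ppp_i})$ over $\Moore(2^{\Max(D)})$, and a strict monotonicity of the resulting count in the local data --- are not established in the paper and are nontrivial; in particular, the paper's construction leans heavily on the isomorphism $\F(D) \cong \Z(\Max(D))$ and the weak approximation theorem, both specific to the Dedekind case, and nothing in the paper rules out the ``accidental collapse'' you mention, where a non-Dedekind semilocal Pr\"ufer domain could have exactly $|\Moore(2^{\Max(D)})|$ semistar operations. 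Your proposal is therefore an accurate assessment of the status of the statement rather than a proof of it.
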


\section{Background on semistar operations and nuclei}


A {\it magma} is a set $M$ equipped with a binary operation on $M$ (which we write multiplicatively).  An {\it ordered magma} is a magma $M$ equipped with a partial ordering $\leq$ such that $x \leq y$ and $x' \leq y'$ imply $xx' \leq yy'$ for all $x, y, x', y' \in M$.    An ordered magma $M$ is said to be a {\it multiplicative lattice} (resp., {\it near multiplicative lattice}) if $M$ is a commutative monoid and one has
$\bigvee (XY) = (\bigvee X)(\bigvee Y)$ for all subsets (resp., nonempty subsets) $X,Y$ of $M$ \cite{ell}.  A multiplicative lattice is equivalently a near multiplicative lattice $M$ such that $\bigwedge M$ exists and annihilates every element of $M$.  For the purposes of this paper the relevant examples are as follows.

\begin{example} Let $D$ be an integral domain with quotient field $K$.
\begin{enumerate}
\item The complete lattice $\Mod_D(K)$ of all $D$-submodules of $K$ is a multiplicative lattice under the operation $(I,J) \longmapsto IJ$.
\item The lattice $\F(D)$ of all nonzero $D$-submodules of $K$ is a near multiplicative lattice.
\end{enumerate}
\end{example}

For any self-map $\star$ of a set $X$ we write $x^\star = \star(x)$ for all $x \in X$.  
A {\it closure operation} on a poset $S$ is a self-map $\star$ of $S$ satisfying the following conditions for all $x,y \in S$.
\begin{enumerate}
\item $x \leq x^\cl$.
\item $x \leq y$ implies $x^\cl \leq y^\cl$.
\item $(x^\cl)^\cl = x^\cl$.
\end{enumerate}
 A {\it nucleus} on an ordered magma $M$ is a closure operation $\star$ on $M$ such that $x^\star y^\star \leq (xy)^\star$ for all $x,y \in M$.   Nuclei were first studied in the contexts of ideal lattices and locales and later in the context of quantales as {\it quantic nuclei} \cite{nie}.

\begin{theorem}[{\cite[Theorem 1.4]{ell}}]
A semistar operation on an integral domain $D$ is equivalently a nucleus on the near multiplicative lattice $\F(D)$.  Moreover, the following are equivalent for any self-map $\star$ of $\F(D)$.
\begin{enumerate}
\item $\star$ is a semistar operation on $D$.
\item $\star$ is a closure operation on the lattice $\F(D)$ and $(aI)^\star = aI^\star$ for all $I \in \F(D)$ and all nonzero $a \in D$.
\item $\star$ is a closure operation on the lattice $\F(D)$ and $(I^\star J^\star)^\star = (IJ)^\star$ for all $I,J \in \F(D)$.
\item $IJ \subseteq K^\star$ if and only if $I J^\star \subseteq K^\star$ for all $I,J,K \in \F(D)$.
\end{enumerate}
\end{theorem}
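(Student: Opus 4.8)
The first assertion is a matter of unwinding definitions: $\F(D)$ is an ordered magma under $\subseteq$ and the module product (indeed it is a near multiplicative lattice by the Example), so a nucleus on $\F(D)$ is by definition a closure operation $\star$ with $I^\star J^\star \subseteq (IJ)^\star$ for all $I,J$, which is exactly a semistar operation. For the equivalence of the four conditions I would prove the cycle $(1)\Rightarrow(2)\Rightarrow(4)\Rightarrow(1)$ together with $(1)\Leftrightarrow(3)$. The latter is immediate: given (1), monotonicity applied to $IJ \subseteq I^\star J^\star$ gives $(IJ)^\star \subseteq (I^\star J^\star)^\star$, while applying $\star$ to $I^\star J^\star \subseteq (IJ)^\star$ and using idempotency gives the reverse inclusion, so $(I^\star J^\star)^\star = (IJ)^\star$; conversely (3) yields (1) at once from extensivity, since $I^\star J^\star \subseteq (I^\star J^\star)^\star = (IJ)^\star$.

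The crucial preliminary step throughout is that the nucleus inequality forces $\star$ to commute with scaling. For $(1)\Rightarrow(2)$ I would test the nucleus condition against principal modules: for any nonzero $c$ in the quotient field the module $cD$ lies in $\F(D)$, so (1) and extensivity give $cI^\star = (cD)I^\star \subseteq (cD)^\star I^\star \subseteq (cD\cdot I)^\star = (cI)^\star$. Applying this inclusion with $c$ replaced by $c^{-1}$ and $I$ by $cI$ yields $c^{-1}(cI)^\star \subseteq I^\star$, hence $(cI)^\star \subseteq cI^\star$, so in fact $(cI)^\star = cI^\star$ for every nonzero $c$ in the quotient field; specializing to $c = a \in D$ gives (2).

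For $(2)\Rightarrow(4)$ I would first bootstrap the scaling identity from $D$ to the whole quotient field: writing $c = a/b$ with $a,b \in D$ nonzero and applying (2) twice gives $(cM)^\star = cM^\star$ for all $M \in \F(D)$. The nontrivial direction of (4) is then a generators argument: if $IJ \subseteq K^\star$ then for each nonzero $x \in I$ we have $xJ \subseteq IJ \subseteq K^\star$, hence $xJ^\star = (xJ)^\star \subseteq (K^\star)^\star = K^\star$; since $IJ^\star$ is generated over $D$ by the products $xy$ with $x \in I$ and $y \in J^\star$, this forces $IJ^\star \subseteq K^\star$, while the converse implication of (4) is trivial from $J \subseteq J^\star$. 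Finally, for $(4)\Rightarrow(1)$ I would specialize $I = D$ to obtain the purely order-theoretic condition $J \subseteq K^\star \iff J^\star \subseteq K^\star$, from which the three closure axioms follow by suitable choices of $K$ (taking $K = J$ gives extensivity; extensivity together with the condition gives monotonicity; and testing $J^\star \subseteq J^\star$ gives idempotency). Specializing $K = IJ$ and using extensivity then gives $IJ^\star \subseteq (IJ)^\star$, and one bootstrap finishes the job: replacing $I$ by $I^\star$ gives $I^\star J^\star \subseteq (I^\star J)^\star$, while commutativity gives $I^\star J \subseteq (IJ)^\star$, so monotonicity and idempotency yield $(I^\star J)^\star \subseteq (IJ)^\star$ and hence the full nucleus inequality $I^\star J^\star \subseteq (IJ)^\star$.

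The main obstacle is precisely this recovery of the nucleus inequality from the weaker-looking hypotheses (2) and (4). Condition (2) only postulates compatibility with scaling by elements of $D$, so extending it to the quotient field is essential, and routing the argument through (4) rather than attempting $(2)\Rightarrow(1)$ directly is what keeps the generators bookkeeping clean. Once scaling-compatibility over the quotient field is in hand, every remaining implication reduces to a short manipulation of the closure axioms.
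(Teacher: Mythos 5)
The paper does not prove this statement; it is imported verbatim from \cite[Theorem 1.4]{ell}, so there is no in-paper argument to compare yours against. Judged on its own, your proof is correct and complete. The cycle $(1)\Rightarrow(2)\Rightarrow(4)\Rightarrow(1)$ together with $(1)\Leftrightarrow(3)$ covers all four equivalences, and the two places where a blind attempt typically goes wrong are both handled properly: you extend the scaling identity from nonzero $a \in D$ to arbitrary nonzero $c$ in the quotient field before running the generators argument (without this, $(2)\Rightarrow(4)$ would fail, since $(2)$ alone only controls scaling by elements of $D$), and in $(4)\Rightarrow(1)$ you correctly extract the closure axioms from the specialization $I = D$ and then bootstrap $IJ^\star \subseteq (IJ)^\star$ to the full nucleus inequality $I^\star J^\star \subseteq (IJ)^\star$ via commutativity, monotonicity, and idempotency. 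Your route is also genuinely elementary in a way worth noting: it exploits structure specific to $\F(D)$ (principal modules $cD$, invertibility of scaling, and the fact that $IJ^\star$ is generated by the products $xy$), whereas the cited source establishes the nucleus characterization in the abstract setting of prequantales and then specializes; your argument buys self-containedness at the cost of not generalizing beyond module lattices with these features.
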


If $\star_1$ and $\star_2$ are nuclei on an ordered magma $M$, then we write $\star_1 \leq \star_2$ if $x^{\star_1} \leq  x^{\star_2}$ for all $x \in M$.  This defines a partial ordering on the set $\N(M)$ of all nuclei on $M$. If $\bigvee X$ exists in $M$ for all nonempty subsets $X$ of $M$ (which holds if $M$ is a near multiplicative lattice), then by \cite[Proposition 4.3]{ell} the poset $\N(M)$ is a complete lattice and one has $$x^{\bigwedge\Gamma} = \bigwedge\{x^\star: \star \in \Gamma\},$$
$$x^{\bigvee\Gamma} = \bigwedge\{y \in M: y \geq x \mbox{ and } \forall \star \in \Gamma \ (y^\star = y)\},$$ for all $\Gamma \subseteq \N(M)$ and all $x \in M$.

\begin{corollary}
For any integral domain $D$ one has $\SStar(D) = \N(\F(D))$ as partially ordered sets.
\end{corollary}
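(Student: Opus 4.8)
The plan is to observe that this corollary is an immediate consequence of the preceding theorem together with an unwinding of the two definitions of the partial ordering. The theorem already supplies the equality of the underlying sets: a self-map $\star$ of $\F(D)$ is a semistar operation on $D$ precisely when it is a nucleus on the near multiplicative lattice $\F(D)$. So the only thing left to verify is that the partial ordering that $\SStar(D)$ carries (as defined in the introduction) coincides with the partial ordering that $\N(M)$ carries for $M = \F(D)$ (as defined just before the corollary).

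Concretely, I would first recall that the order on the near multiplicative lattice $\F(D)$ is set inclusion: its elements are nonzero $D$-submodules of $K$, and $I \leq J$ means $I \subseteq J$. I would then write out both orderings side by side. By definition the nucleus ordering on $\N(\F(D))$ declares $\star_1 \leq \star_2$ iff $x^{\star_1} \leq x^{\star_2}$ for all $x \in \F(D)$, which, since $\leq$ on $\F(D)$ is $\subseteq$, says exactly that $I^{\star_1} \subseteq I^{\star_2}$ for all $I \in \F(D)$. This is verbatim the defining condition of the ordering on $\SStar(D)$. Hence the two posets have the same elements and the same order relation, so they are equal as partially ordered sets.

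There is essentially no obstacle here; the content of the corollary is entirely carried by the cited theorem, and the added claim about the orderings is a definitional check that reduces to the fact that the order on $\F(D)$ is inclusion. I would therefore keep the proof to a single short paragraph, citing the theorem for the set-theoretic identification and remarking that the orderings agree termwise because $\leq$ on $\F(D)$ is $\subseteq$. The only point worth stating explicitly, lest it look like an abuse of the symbol $\leq$, is that the same symbol is being used for inclusion of submodules on the one hand and for the induced ordering of closure operations on the other, and that under this identification the two induced orderings literally coincide.
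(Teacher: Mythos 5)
Your proposal is correct and matches the paper's (implicit) argument exactly: the paper states this corollary without proof precisely because the preceding theorem identifies semistar operations with nuclei on $\F(D)$, and the two order relations coincide definitionally since the order on $\F(D)$ is inclusion. Nothing is missing.
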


If $M$ is an ordered magma, then the map $\N(M) \longrightarrow 2^M$ acting by $\star \longmapsto M^\star$ is an anti-embedding of $\N(M)$ in $2^M$.  We will denote the image $\{M^\star: \star \in \N(M)\}$ of this anti-embedding by $\M(M)$.  In particular, the poset $\N(M)$ is anti-isomorphic to the poset $\M(M)$.  \cite[Proposition 4.6]{ell} implies the following.

\begin{proposition}\label{mprop}
Let $D$ be an integral domain with quotient field $K$.  The poset $\M(\F(D))$ consists of all sets of the form $\CCC \setmin\{(0)\}$, where $\CCC$ is any subset of $\F(D)$ satisfying the following three condtions: (1) $(0) \in \CCC$; (2) if $\mathcal{I} \subseteq \CCC$ then $\bigcap \mathcal{I} \in \CCC$; and (3) if $I \in {\mathcal C}$ then $(I:_K J) \in \CCC$ for all $J \in \F(D)$.  Moreover, the complete lattice $\SStar(D) = \N(\F(D))$ of all semistar operations on $D$  is anti-isomorphic to the poset $\M(\F(D))$.
\end{proposition}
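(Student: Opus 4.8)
The plan is to read off the anti-isomorphism from the anti-embedding already recorded above and to devote the real work to identifying the image $\M(\F(D))$. Since $\SStar(D) = \N(\F(D))$ as posets and since $\star \longmapsto (\F(D))^\star$ anti-embeds $\N(\F(D))$ into $2^{\F(D)}$ with image $\M(\F(D))$, the complete lattice $\SStar(D)$ is automatically anti-isomorphic to $\M(\F(D))$, so nothing remains for the ``moreover'' clause. For the description of $\M(\F(D))$ itself I would exhibit mutually inverse bijections between the nuclei $\star$ on $\F(D)$ and the subsets $\CCC \subseteq \Mod_D(K)$ satisfying (1)--(3), matched so that $(\F(D))^\star = \CCC \setmin \{(0)\}$. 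Explicitly, to a nucleus $\star$ I assign $\CCC_\star = (\F(D))^\star \cup \{(0)\}$, its closed modules together with the zero module, and to a set $\CCC$ I assign the self-map $I \longmapsto \bigcap\{C \in \CCC : I \subseteq C\}$ of $\F(D)$.

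First I would check that $\CCC_\star$ satisfies (1)--(3). Condition (1) holds by construction. For (2), note that $K$ is $\star$-closed, so it arises as the empty intersection, while any intersection of $\star$-closed modules is $\star$-closed by the usual argument for closure operations; the only intersections that leave $(\F(D))^\star$ are those dropping to $(0)$, which (1) absorbs. The crux is (3): for a $\star$-closed $I \neq (0)$ and $J \in \F(D)$, writing $R = (I :_K J)$ and assuming $R \neq (0)$, the nucleus inequality gives
$$R^\star J \subseteq R^\star J^\star \subseteq (RJ)^\star \subseteq I^\star = I,$$
whence $R^\star \subseteq (I :_K J) = R$ and so $R$ is $\star$-closed; the cases $I = (0)$ or $R = (0)$ land in $\{(0)\} \subseteq \CCC_\star$ because $K$ is a field. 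This is exactly the place where the multiplicativity of $\star$ is used.

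Conversely, given $\CCC$ satisfying (1)--(3), condition (2) applied to the empty family forces $K \in \CCC$, so $\bigcap\{C \in \CCC : I \subseteq C\}$ ranges over a nonempty family, lies in $\CCC$ by (2), and contains $I \neq (0)$, hence is a nonzero module; thus the assigned self-map is a well-defined closure operation on $\F(D)$ whose closed modules are precisely $\CCC \setmin \{(0)\}$. To see it is a nucleus I would run the previous computation in reverse: setting $C = (IJ)^\star$, the containment $IJ \subseteq C$ gives $I \subseteq (C :_K J) \in \CCC$, so $I^\star \subseteq (C :_K J)$ and $I^\star J \subseteq C$; repeating with $I^\star$ in place of $I$ yields $I^\star J^\star \subseteq C = (IJ)^\star$. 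The two assignments are visibly inverse, so $\M(\F(D))$ is exactly the claimed family.

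The main obstacle is the bookkeeping around the zero module: $\F(D)$ is only a near multiplicative lattice and is not closed under arbitrary intersections, since a descending family of nonzero submodules of $K$ can meet in $(0)$, so the closed modules of a nucleus form a Moore family only after $(0)$ is adjoined inside the ambient complete lattice $\Mod_D(K)$. Keeping straight which intersections and which residuals $(\,\cdot :_K J)$ fall to $(0)$, and checking that adjoining $(0)$ restores closure under both operations while the top $K$ is supplied by the empty meet, is the delicate part; conceptually this is precisely the instance of \cite[Proposition 4.6]{ell} for $M = \F(D)$, with the lattice meet realized as intersection and the residuation of the ordered magma realized as $(\,\cdot :_K \cdot\,)$.
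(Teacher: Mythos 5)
Your proof is correct. Note, though, that the paper offers no argument of its own here: Proposition \ref{mprop} is stated as an immediate consequence of \cite[Proposition 4.6]{ell}, a general description of $\M(M)$ for a nucleus on an ordered magma in terms of closure under meets and under residuation. What you have done is unfold that general result in the special case $M = \F(D)$, identifying the residual with $(\,\cdot :_K \cdot\,)$ and the meet with intersection taken inside the ambient complete lattice $\Mod_D(K)$, and then verifying everything by hand. The substance is the same; your version buys self-containedness at the cost of redoing the abstract argument. Your two key computations are exactly right and are where the nucleus axiom enters: $R^\star J \subseteq R^\star J^\star \subseteq (RJ)^\star \subseteq I^\star = I$ shows that residuals of closed elements are closed, and the reverse chain $IJ \subseteq (IJ)^\star \Rightarrow I^\star \subseteq ((IJ)^\star :_K J) \Rightarrow I^\star J^\star \subseteq (IJ)^\star$ recovers the nucleus property from conditions (1)--(3). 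You also correctly handle the two points the paper's terse statement glosses over: the empty intersection supplying $K$, and the fact that $\CCC$ must really be read as a subset of $\Mod_D(K)$ rather than of $\F(D)$, since condition (1) demands $(0) \in \CCC$; adjoining $(0)$ is precisely what makes the fixed-point family closed under the intersections and residuals that collapse to zero.
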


\section{Constructing the lattice of all semistar operations}\label{sec:SO}

In this section we construct the lattice $\SStar(D) = \N(\F(D))$ of all semistar operations on any Dedekind domain $D$ from the set $\Max(D)$ of its maximal ideals.  

If $D$ is a Dedekind domain, then for any $\ppp \in \Max(D)$ we let $v_\ppp: K \longrightarrow \ZZ \cup\{\infty\}$ denote the normalized $\ppp$-adic valuation on $K$.  We let $\ZZ[\infty]$ denote the near multiplicative lattice $\ZZ \cup \{\infty\}$, where $a \leq \infty$ for all $a \in \ZZ$ and $a + \infty = \infty = \infty + a$ for all $a \in \ZZ[\infty]$.  Also, we let $\ZZ[\pm \infty]$ denote the multiplicative lattice obtained from $\ZZ[\infty]$ by adjoining an annihilator $-\infty = \inf \ZZ[\infty]$.  In particular, we assume $-\infty + \infty = -\infty = \infty + (-\infty)$.  Also, for any $x \in \ZZ[\pm \infty]$ we define $-x$ and $|x|$ in $\ZZ[\pm \infty]$ in the obvious way.

For any near multiplicative lattice $M$ and any set $S$ the set $M^S$ of all functions from $S$ to $M$ is a near multiplicative lattice under pointwise multiplication.  For any set $S$ we let
$$\Z(S) = \left\{f: S \longrightarrow \ZZ[\infty]: f^{-1}(\ZZ_{<0}) \mbox{ is finite}\right\},$$
which is a sub near multiplicative lattice of the multiplicative lattice ${\ZZ[\pm\infty]}^{S}$.

In the next proposition we show that $\F(D) \cong \Z(\Max(D))$ as near multiplicative lattices.  Let $\ppp \in \Max(D)$.  Define $\ppp^\infty D_\ppp = (0)$ and $\ppp^{-\infty}D_\ppp = K$.  For any $I \in \F(D)$ we define $v_\ppp(I) \in \ZZ\cup \{\pm \infty\}$ so that $ID_\ppp = \ppp^{v_\ppp(I)}D_\ppp$.  For any function
$f: \Max(D) \longrightarrow \ZZ[\pm \infty]$ we let $$[f] = \bigcap_{\ppp \in \Max(D)} \ppp^{-f(\ppp)} D_\ppp = \{x \in K: v_\ppp(x) \geq -f(\ppp) \mbox{ for all } \ppp \in \Max(D)\}.$$
There is a map $[-]: {\ZZ[\pm \infty]} ^{\Max(D)} \longrightarrow \operatorname{Mod}_D(K)$ acting by $f \longmapsto [f]$.

\begin{proposition}\label{dedlemma}
Let $D$ be a Dedekind domain, and let $f, g \in \ZZ[\pm \infty]^{\Max(D)}$.
\begin{enumerate}
\item $f \in \Z(\Max(D))$ if and only if $[f] \neq (0)$.
\item  If $f \in \Z(\Max(D))$, then $[f]D_\ppp = \ppp^{-f(\ppp)} D_\ppp$ for all $\ppp \in \Max(D)$.
\item If $f,g \in \Z(\Max(D))$, then $([f]:_K [g]) = [-(-f+g)]$.
\item The map $[-]: \Z(\Max(D)) \longrightarrow \F(D)$ is an isomorphism of near multiplicative lattices with inverse acting by $I \longmapsto (\ppp \longmapsto -v_\ppp(I))$.
\end{enumerate}
\end{proposition}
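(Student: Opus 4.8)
The plan is to reduce everything to bookkeeping with the valuations $v_\ppp$, resting on two facts. The first is the local–global principle that $M = \bigcap_{\ppp \in \Max(D)} M D_\ppp$ for every $D$-submodule $M$ of $K$, valid over any integral domain: for $x \in K$ the set $\{s \in D : sx \in M\}$ is an ideal, and $x \in M D_\ppp$ for every $\ppp$ precisely when this ideal lies in no maximal ideal, i.e. equals $D$, i.e. $x \in M$. The second is that the nonzero $D_\ppp$-submodules of $K$ are exactly the $\ppp^n D_\ppp$ with $n \in \ZZ$ together with $K = \ppp^{-\infty}D_\ppp$. The only genuine input is the approximation theorem for $D$: given finitely many maximal ideals $\ppp_1,\dots,\ppp_k$ with prescribed integers $n_i$ and one further prime $\ppp \notin \{\ppp_1,\dots,\ppp_k\}$, there is $s \in D$ with $v_{\ppp_i}(s) \geq n_i$ for all $i$ and $v_\ppp(s) = 0$; concretely one takes a nonzero element of $\prod_i \ppp_i^{\max(n_i,0)}$ lying outside $\ppp$, which exists because that ideal, a product of powers of primes distinct from $\ppp$, is not contained in $\ppp$.

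For (1), if $f \in \Z(\Max(D))$ then only finitely many primes have $f(\ppp) < 0$, and approximation gives a nonzero $x \in D$ with $v_\ppp(x) \geq -f(\ppp)$ at each such prime; since $v_\ppp(x) \geq 0 \geq -f(\ppp)$ elsewhere, $x \in [f]$ and $[f] \neq (0)$. Conversely, if $f \notin \Z(\Max(D))$ then either $f(\ppp_0) = -\infty$ for some $\ppp_0$, forcing $v_{\ppp_0}(x) \geq \infty$ and hence $x = 0$, or $f(\ppp) < 0$ for infinitely many $\ppp$, so any nonzero $x \in [f]$ would have $v_\ppp(x) > 0$ at infinitely many primes, impossible for a nonzero element of $K$; either way $[f] = (0)$.

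For (2), the inclusion $[f]D_\ppp \subseteq \ppp^{-f(\ppp)}D_\ppp$ is immediate. For the reverse I would take $y$ with $v_\ppp(y) \geq -f(\ppp)$ and clear denominators: the primes $\qqq \neq \ppp$ at which $-f(\qqq) > 0$ or $v_\qqq(y) < 0$ form a finite set (using $f \in \Z(\Max(D))$ and $y \neq 0$), and approximation yields $s \in D$ with $v_\ppp(s) = 0$ and $v_\qqq(s)$ large enough that $v_\qqq(sy) \geq -f(\qqq)$ at each such $\qqq$; at all remaining primes $v_\qqq(sy) \geq 0 \geq -f(\qqq)$ automatically, so $sy \in [f]$ and $y = (sy)/s \in [f]D_\ppp$. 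This is exactly where the finiteness condition defining $\Z(\Max(D))$ is needed — without it infinitely many denominators would be forced and localization would not commute with the intersection — so I expect (2) to be the main obstacle.

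Granting (2), part (3) follows by a valuation computation: $x \in ([f]:_K[g])$ means $v_\ppp(x) + v_\ppp(y) \geq -f(\ppp)$ for every $\ppp$ and every $y \in [g]$, and since the $\ppp$-valuations of elements of $[g]$ have infimum $-g(\ppp)$ (attained when $g(\ppp) < \infty$ and unbounded below when $g(\ppp) = \infty$, by (2)), the binding constraint at $\ppp$ is $v_\ppp(x) \geq g(\ppp) - f(\ppp) = -\bigl(-(-f+g)\bigr)(\ppp)$; ranging over $\ppp$ gives $([f]:_K[g]) = [-(-f+g)]$, the $\pm\infty$ conventions making the degenerate cases come out correctly, including when both sides are $(0)$. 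Finally, for (4): part (1) shows $[-]$ lands in $\F(D)$ (each $[f]$ is a nonzero $D$-submodule of $K$); part (2) gives $[f]D_\ppp = \ppp^{-f(\ppp)}D_\ppp$, whence injectivity, since the exponent is determined, and, via the local–global principle, the identity $[f][g] = [f+g]$ after checking it after localization, so $[-]$ preserves the monoid operation; order in both directions is read off from $[f] \subseteq [g] \iff -f(\ppp) \geq -g(\ppp) \iff f \leq g$ by localizing; and surjectivity holds because for $I \in \F(D)$ the function $\ppp \mapsto -v_\ppp(I)$ lies in $\Z(\Max(D))$ (finitely many $v_\ppp(I)$ are positive, as $v_\ppp(I) \leq v_\ppp(a) = 0$ for almost all $\ppp$ and any fixed nonzero $a \in I$) and reconstructs $I = \bigcap_\ppp ID_\ppp$. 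Being an order isomorphism that preserves multiplication, $[-]$ is an isomorphism of near multiplicative lattices.
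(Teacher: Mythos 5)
Your proposal is correct and follows essentially the same route as the paper: both hinge on proving the localization formula in (2) via an approximation argument at the finitely many primes where $f$ is negative (you clear denominators with an element $s\in D$ that is a unit at $\ppp$, the paper constructs $y\in[f]$ with prescribed valuations directly by weak approximation), and both then derive (3) and (4) by routine valuation bookkeeping. The extra detail you supply for (1), the degenerate $\pm\infty$ cases in (3), and surjectivity in (4) — which the paper dismisses as clear — all checks out.
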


\begin{proof}
Statement (1) is clear. 
We prove statement (2).  Clearly one has $[f]D_\ppp \subseteq \ppp^{-f(\ppp)}D_\ppp$.  We wish to show the reverse containment.  Let $x \in \ppp^{-f(\ppp)}D_\ppp$.   By the weak approximation theorem for Dedekind domains, there exists $y \in K$ such that
$$v_\qqq(y) = \left\{\begin{tabular}{ll} $v_{\qqq}(x)$ & if $\qqq = \ppp$  \\
		$-f(\qqq)$ & if $\qqq \neq \ppp$ and $f(\qqq) < 0$ \\	
		$\geq 0$ & otherwise.
 \end{tabular}\right.$$
Thus $xD_\ppp = yD_\ppp$ and $y \in [f]$, whence $x \in [f]D_\ppp$. This proves (2).
To prove statement (3), note that
\begin{eqnarray*}
([f]:_K [g]) & = & \bigcap_\ppp \left(\ppp^{-f(\ppp)} D_\ppp :_K [g]\right) \\
  & = & \bigcap_{f(\ppp) < \infty} \left(\ppp^{-f(\ppp)} \left(D_\ppp :_K [g]D_\ppp  \right)\right) \\
  & = & \bigcap_{f(\ppp) < \infty} \left(\ppp^{-f(\ppp)} (D_\ppp :_K \ppp^{-g(\ppp)} D_\ppp) \right) \\
  & = & \bigcap_{f(\ppp) < \infty} \ppp^{-f(\ppp)+g(\ppp)} D_\ppp \\
  & = & [-(-f+g)].
\end{eqnarray*}
Finally,  statement (4) follows readily from statement (3) and the fact that $I = \bigcap_{\ppp \in \Max(D)} \ppp^{v_\ppp(I)} D_\ppp$ and $v_\ppp(IJ) = v_\ppp(I)+v_\ppp(J)$ for all $I,J \in \F(D)$.
\end{proof}

For any set $S$ and any $f,g \in \ZZ[\pm\infty]^S$, we write $f \preceq g$ if $f(x) \leq g(x)$ for almost all $x \in S$ and $f(x) = \infty$ if and only if $g(x) = \infty$ for all $x \in S$. The relation $\preceq$ is a preorder on $\ZZ[\pm \infty]^S$.

\begin{lemma}\label{lem1}
Let $S$ be a set and $\XXX \subseteq \Z(S)$.  Consider the following conditions on $\XXX$.
\begin{enumerate}
\item If $\YYY \subseteq \XXX$ and $\inf \YYY \in \Z(S)$, then $\inf \YYY \in \XXX$.
\item If $f \in \XXX$ and $g \in \Z(S)$ with $g \preceq f$, then $g \in \XXX$.
\item If $f \in \XXX$ and $g, -(-f+g) \in \Z(S)$ then $-(-f+g) \in \XXX$.
\end{enumerate}
One has $(1) \wedge (2) \Leftrightarrow (1) \wedge (3)$ and $(2) \Rightarrow (3)$.  Moreover, if $S = \Max(D)$, where $D$ is a Dedekind domain, and if $[\XXX]$ for any $\XXX \subseteq \Z(\Max(D))$ denotes the image of $\XXX$ under the isomorphism $[-]: \Z(\Max(D)) \longrightarrow \F(D)$, then the above conditions hold if and only if $[\XXX] = \F(D)^\star$ for some semistar operation $\star$ on $D$.
\end{lemma}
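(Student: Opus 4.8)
The plan is to derive the whole statement from Proposition~\ref{mprop} by transporting its three conditions on a subset $\CCC \subseteq \F(D)$ across the isomorphism $[-]\colon \Z(\Max(D)) \longrightarrow \F(D)$ of Proposition~\ref{dedlemma}(4). Writing $\CCC = [\XXX] \cup \{(0)\}$, the requirement $(0) \in \CCC$ of Proposition~\ref{mprop} is automatic, so I need only match its intersection-closure and colon-closure requirements with conditions (1) and (3). For the intersection-closure, I would use that the map $[-]$, read on all of $\ZZ[\pm\infty]^{\Max(D)}$, sends pointwise infima to intersections, so that $\bigcap [\YYY] = [\inf \YYY]$ for every $\YYY \subseteq \XXX$; by Proposition~\ref{dedlemma}(1) this intersection is $(0)$ exactly when $\inf\YYY \notin \Z(\Max(D))$, and otherwise it equals $[\inf\YYY]$, which lies in $[\XXX]$ iff $\inf\YYY \in \XXX$. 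Thus ``$\bigcap \mathcal{I} \in \CCC$ for all $\mathcal{I} \subseteq \CCC$'' is precisely condition (1), the empty family contributing $K = [\mathbf{\infty}]$ via (1) applied to $\YYY = \emptyset$. For the colon-closure, I would invoke Proposition~\ref{dedlemma}(3): for $f \in \XXX$ and any $J = [g] \in \F(D)$ one has $([f]:_K[g]) = [-(-f+g)]$, which lies in $\CCC$ automatically when $-(-f+g) \notin \Z(\Max(D))$ (being then $(0)$) and otherwise lies in $\CCC$ iff $-(-f+g) \in \XXX$; this is exactly condition (3). Hence $[\XXX] = \F(D)^\star$ for some semistar operation $\star$ on $D$ if and only if (1) and (3) both hold.

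It then remains to establish the order-theoretic facts $(2) \Rightarrow (3)$ and $(1)\wedge(2) \Leftrightarrow (1)\wedge(3)$, which combine with the previous paragraph to give the characterization in the stated form. For $(2) \Rightarrow (3)$, given $f \in \XXX$ and $g, h := -(-f+g) \in \Z(S)$, I would verify $h \preceq f$ and apply (2). Off the finitely many points where $g < 0$ one has $h \leq f$ -- indeed $h = f - g$ wherever $f$ is finite -- yielding the ``almost all'' inequality; a short check of the $\pm\infty$ conventions shows $h(\ppp) = \infty$ iff $-f(\ppp)+g(\ppp) = -\infty$ iff $f(\ppp) = \infty$ (here using that $g$, being in $\Z(S)$, never takes the value $-\infty$), which is the $\infty$-pattern clause of $\preceq$. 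Since $(2)\Rightarrow(3)$ supplies the forward half of the conjunction equivalence at once, the remaining task is $(1)\wedge(3) \Rightarrow (2)$. Here, given $f \in \XXX$ and $g \preceq f$ in $\Z(S)$, I would produce the input to which (3) applies: put $h(\ppp) = f(\ppp) - g(\ppp)$ where $f(\ppp)$ is finite and $h(\ppp) = 0$ where $f(\ppp) = \infty$. Then $h \in \Z(S)$, since its negative part is contained in the finite set $\{\ppp : g(\ppp) > f(\ppp)\}$, and $-(-f+h) = g$ pointwise, so (3) yields $g \in \XXX$.

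The step I expect to be the main obstacle is not conceptual but the disciplined bookkeeping of the $\ZZ[\pm\infty]$ arithmetic, in which $-\infty$ annihilates addition while $\infty$ does not. Each ingredient above -- the identity $\bigcap[\YYY] = [\inf\YYY]$, the way the colon formula of Proposition~\ref{dedlemma}(3) encodes $(I:_K J) = (0)$ as the failure $-(-f+g)\notin\Z(\Max(D))$, and the two $\infty$-pattern verifications inside the $\preceq$ arguments -- depends on keeping these conventions straight and on the fact that elements of $\Z(S)$ never equal $-\infty$. Finally, I would note that the construction in the second paragraph in fact uses only (3), so that $(3)\Rightarrow(2)$, and hence $(2)\Leftrightarrow(3)$, holds outright; I would nonetheless record the result in the weaker conjunction form, since that is the shape in which the subsequent constructions of $\SStar(D)$ invoke it.
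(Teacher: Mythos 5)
Your proposal is correct, and its overall architecture matches the paper's: the semistar characterization is obtained exactly as in the paper by transporting the three conditions of Proposition \ref{mprop} across the isomorphism of Proposition \ref{dedlemma} (the paper states this in one line; your bookkeeping of the $(0)$ and empty-intersection cases is the right way to fill it in), and your proof of $(2) \Rightarrow (3)$ is the same finiteness-of-$\{x : g(x)<0\}$ argument the paper gives. Where you genuinely diverge is the reverse implication. The paper proves only $(1) \wedge (3) \Rightarrow (2)$: for each $x$ with $g(x) < \infty$ it builds a function $h_x$ supported at $x$ and at the finitely many points where $g > f$, applies (3) to get $-(-f+h_x) \in \XXX$ for each $x$, and then uses condition (1) to recover $g$ as $\inf_x -(-f+h_x)$. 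Your single function $h$, equal to $f-g$ on $f^{-1}(\ZZ)$ and to $0$ on $f^{-1}(\infty)$, satisfies $h \in \Z(S)$ and $-(-f+h) = g$ pointwise (the $-\infty + 0 = -\infty$ convention handles the $\infty$-locus), so one application of (3) already gives $g \in \XXX$. This is simpler, avoids condition (1) entirely, and establishes the strictly stronger fact $(2) \Leftrightarrow (3)$, of which the lemma's stated equivalence $(1)\wedge(2) \Leftrightarrow (1)\wedge(3)$ is an immediate consequence; it also shows that, under the dictionary of Proposition \ref{dedlemma}(3), the set $\{h \in \Z(S) : h \preceq f\}$ coincides exactly with the set of (nonzero) colon ideals $([f]:_K J)$, which is a cleaner way to see why (2) and (3) say the same thing. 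I find no gap in your argument.
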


\begin{proof}
First, suppose that condition (2) holds.  Let $f \in \XXX$ and $g \in \Z(S)$ with $h = -(-f+g) \in \Z(S)$.  Note that $h(x) = \infty$ if $f(x) = \infty$, and if $f(x) < \infty$ then $g(x) < \infty$ and $h(x) = f(x)-g(x) < \infty$.   In particular, $h(x) > f(x)$ implies $g(x) < 0$, which holds for only finitely many $x$.  It follows, then, that $h \preceq f$, whence $h = -(-f+g) \in \Z(S)$.  This shows that $(2) \Rightarrow (3)$.

Next, suppose $(1) \wedge (3)$ holds.  Let $f \in \XXX$ and $g \in \Z(S)$ with $g \preceq f$.  Let $x \in S$ with $g(x) < \infty$ (so $f(x) < \infty$).  Define
$h_x \in \Z(S)$ as follows:
$$h_x(y) = \left\{\begin{tabular}{ll} $f(y)-g(y)$ & if $y = x$ or if $y \neq x$ and $g(y) > f(y)$ \\
		$0$ & otherwise.
 \end{tabular}\right.$$
Then
$$-(-f+h_x)(y) = \left\{\begin{tabular}{ll} $g(y)$ & if $y = x$ or if $y \neq x$ and $g(y) > f(y)$ \\
		$f(y)$ & otherwise.
 \end{tabular}\right.$$
By condition (3) we have $-(-f + h_x) \in \XXX$.
Therefore, by condition (1), we have $g = \inf_{g(x) < \infty} -(-f + h_x) \in \XXX$.
Thus $(1) \wedge (3) \Rightarrow (2)$, and since $(2) \Rightarrow (3)$ we have $(1) \wedge (2) \Leftrightarrow (1) \wedge (3)$.

Finally, if $S = \Max(D)$, where $D$ is a Dedekind domain, then, by Propositions \ref{mprop} and \ref{dedlemma}, one has  $[\XXX] = \F(D)^\star$ for some semistar operation $\star$ on $D$ if and only if conditions $(1)$ and $(3)$ hold.
\end{proof}

\begin{lemma}\label{lem2}
Let $S$ be a set.  For any $\XXX \subseteq \Z(S)$, let
$$\XXX^\preceq = \{g \in \Z(S): g \preceq f \mbox{ for some } f \in \XXX\},$$
and let
$$\XXX^{\inf} = \{\inf \YYY: \YYY \subseteq \XXX \mbox{ and } \inf \YYY \in \Z(S)\}.$$
\begin{enumerate}
\item $\XXX \longmapsto \XXX^\preceq$ and $\XXX \longmapsto \XXX^{\inf}$ are closure operations on $2^{\Z(S)}$.
\item For any $\XXX \subseteq \Z(S)$ one has $(\XXX^\preceq)^{\inf} \supseteq (\XXX^{\inf})^\preceq$.
\item $\XXX^\dagger = (\XXX^\preceq)^{\inf}$ is the smallest subset $\mathcal{W}$ of $\Z(S)$ containing $\XXX$ such that $\mathcal{W}^\preceq \subseteq \mathcal{W}$ and $\mathcal{W}^{\inf} \subseteq \mathcal{W}$.
\item $\XXX \longmapsto \XXX^\dagger$ is a closure operation on $2^{\Z(S)}$.
\end{enumerate}
\end{lemma}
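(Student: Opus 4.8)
The plan is to verify the three closure-operation axioms (extensivity, monotonicity, idempotence) for the operators in (1), to establish the single nontrivial containment in (2) by an explicit truncation construction, and then to assemble (3) and (4) formally out of (1) and (2).

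For (1), extensivity and monotonicity are immediate: $f \preceq f$ gives $\XXX \subseteq \XXX^\preceq$, and taking $\YYY = \{f\}$ gives $\XXX \subseteq \XXX^{\inf}$, while enlarging $\XXX$ only enlarges the pool of witnesses $f$ (resp. the admissible families $\YYY$). Idempotence of $\preceq$ reduces to transitivity of the preorder $\preceq$: if $h \preceq g \preceq f$ with $f \in \XXX$, then $h \preceq f$, so $(\XXX^\preceq)^\preceq \subseteq \XXX^\preceq$. Idempotence of $\inf$ reduces to associativity of pointwise infima in the complete lattice $\ZZ[\pm\infty]^S$: an element of $(\XXX^{\inf})^{\inf}$ is the $\inf$ of a family whose members are themselves infima of subfamilies of $\XXX$, hence is the $\inf$ of the union of those subfamilies; as this $\inf$ is assumed to lie in $\Z(S)$, it belongs to $\XXX^{\inf}$.

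The crux is (2). Let $g \in (\XXX^{\inf})^\preceq$, so $g \in \Z(S)$ and $g \preceq h$ where $h = \inf \YYY$ for some $\YYY \subseteq \XXX$ with $h \in \Z(S)$. I would show $g \in (\XXX^\preceq)^{\inf}$ by exhibiting $g$ as an $\inf$ of elements of $\XXX^\preceq$. For each $f \in \YYY$ define $g_f$ by $g_f(x) = \infty$ if $f(x) = \infty$ and $g_f(x) = g(x)$ otherwise. Using that $h(x) = \infty$ exactly when $f(x) = \infty$ for every $f \in \YYY$, and that $g(x) = \infty \iff h(x) = \infty$, one checks $g_f(x) = \infty \iff f(x) = \infty$ (the infinity-matching half of $\preceq$); that $g_f(x) \le f(x)$ for all $x$ outside the finite set $\{x : g(x) > h(x)\}$, since $g \le h \le f$ there (the ``almost all'' half of $g_f \preceq f$); and that $g_f^{-1}(\ZZ_{<0}) \subseteq g^{-1}(\ZZ_{<0})$ is finite, so $g_f \in \Z(S)$. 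Thus $g_f \preceq f$, i.e. $g_f \in \XXX^\preceq$. Finally $\inf_{f \in \YYY} g_f = g$: at a point $x$ where some $f(x) < \infty$ the value $g(x)$ is attained and the $\infty$-entries do not lower it, while where all $f(x) = \infty$ both sides equal $g(x) = \infty$. Since $g \in \Z(S)$, this realizes $g \in (\XXX^\preceq)^{\inf}$. The main obstacle is precisely this bookkeeping of the $\infty$-values: the truncation $g_f$ is engineered exactly so that the biconditional on $\infty$ in the definition of $\preceq$ is preserved while the finite part of $g$ is recovered in the limit.

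With (2) in hand, (3) and (4) are formal. For (3), $\XXX \subseteq \XXX^\dagger$ follows from extensivity of both operators; $\XXX^\dagger$ is $\inf$-closed by idempotence of $\inf$, since $(\XXX^\dagger)^{\inf} = ((\XXX^\preceq)^{\inf})^{\inf} = \XXX^\dagger$; and $\XXX^\dagger$ is $\preceq$-closed because, applying (2) with $\XXX^\preceq$ in place of $\XXX$ and then using idempotence of $\preceq$, $((\XXX^\preceq)^{\inf})^\preceq \subseteq ((\XXX^\preceq)^\preceq)^{\inf} = (\XXX^\preceq)^{\inf} = \XXX^\dagger$. Minimality is just monotonicity: if $\mathcal{W} \supseteq \XXX$ satisfies $\mathcal{W}^\preceq \subseteq \mathcal{W}$ and $\mathcal{W}^{\inf} \subseteq \mathcal{W}$, then $\XXX^\preceq \subseteq \mathcal{W}^\preceq \subseteq \mathcal{W}$, whence $\XXX^\dagger = (\XXX^\preceq)^{\inf} \subseteq \mathcal{W}^{\inf} \subseteq \mathcal{W}$. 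For (4), extensivity and monotonicity of $\dagger$ follow from those of $\preceq$ and $\inf$ by composition, and idempotence is immediate from (3): $\XXX^\dagger$ is itself a $\preceq$- and $\inf$-closed set containing $\XXX^\dagger$, so the smallest such set, namely $(\XXX^\dagger)^\dagger$, equals $\XXX^\dagger$.
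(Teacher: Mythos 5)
Your proposal is correct and follows essentially the same route as the paper: the key step is the identical truncation construction (your $g_f$ is the paper's $g_\lambda$, defined by replacing $g$ with $\infty$ exactly where $f$ is $\infty$), and parts (1), (3), (4) are the same formal consequences that the paper dispatches by declaring (1) clear and citing a general lemma on composing closure operations. The extra detail you supply for those formal parts is accurate but does not constitute a different approach.
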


\begin{proof}
Statement (1) is clear.  To prove (2), let $g \in (\XXX^{\inf})^\preceq$.  Then $g \preceq \inf_\lambda f_\lambda$, where $f_\lambda \in \XXX$ for all $\lambda$ in some indexing set and $\inf_\lambda f_\lambda \in \Z(S)$.  Now, for almost all $x \in S$ one has $g(x) \leq f_\lambda(x)$ for all $\lambda$, and also $g(x) = \infty$ if and only if $f_\lambda(x) = \infty$ for all $\lambda$.  Let
$$g_\lambda(x) = \left\{\begin{tabular}{ll} $g(x)$ & if $f_\lambda(x) < \infty$ \\
		$\infty$ & otherwise.
 \end{tabular}\right.$$
Since $g \in \Z(S)$ we have $g_\lambda \in \Z(S)$ for all $\lambda$.  Moreover, one 
has $g_\lambda \preceq f_\lambda$, and therefore $g_\lambda \in \XXX^\preceq$, for all $\lambda$.  Finally, since $g = \inf_\lambda g_\lambda$, we have $g \in (\XXX^\preceq)^{\inf}$.  This proves (2).  Finally, statements (3) and (4) follow readily from statement (2) and \cite[Proposition 4.8]{ell}.
\end{proof}

Combining Proposition \ref{dedlemma} with Lemmas \ref{lem1} and \ref{lem2} we obtain the following.

\begin{theorem}\label{dedthm}
Let $D$ be a Dedekind domain.   For any $\XXX \subseteq \Z(\Max(D))$, let
$$\XXX^\preceq = \{g \in \Z(\Max(D)): g \preceq f \mbox{ for some } f \in \XXX\},$$
and let
$$\XXX^{\inf} = \{\inf \YYY: \YYY \subseteq \XXX \mbox{ and } \inf \YYY \in \Z(\Max(D))\}.$$
Let $[\XXX]$ denote the image of $\XXX$ under the isomorphism $[-]: \Z(\Max(D)) \longrightarrow \F(D)$ of Proposition \ref{dedlemma}.
\begin{enumerate}
\item $\XXX^\dagger = (\XXX^\preceq)^{\inf}$ is the smallest subset of $\Z(\Max(D))$ containing $\XXX$ such that
$\left[\XXX^\dagger\right] = {\F(D)}^\star$ for some semistar operation $\star$ on $D$.
\item The self-map of $\dagger$ of $2^{\Z(\Max(D))}$ acting by $\XXX \longmapsto \XXX^\dagger$ is a closure operation on $2^{\Z(\Max(D))}$.
\item$\XXX \in  (2^{\Z(\Max(D))})^\dagger$ if and only if $\XXX^\preceq \subseteq \XXX$ and $\XXX^{\inf} \subseteq \XXX$.
\item The map $(2^{\Z(\Max(D))})^\dagger \longrightarrow \M(\F(D))$ acting by $\XXX \longmapsto [\XXX]$ is an isomorphism of posets; explicitly, this isomorphism acts by $$\XXX \longmapsto \left\{\bigcap_{\ppp \in \Max(D)} \ppp^{-f(\ppp)} D_\ppp : f \in \XXX\right\}$$
with inverse acting by
$$\mathcal{C} \longmapsto \{\ppp \longmapsto -v_\ppp(I): I \in \mathcal{C}\}.$$
\end{enumerate}
\end{theorem}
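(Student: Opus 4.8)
Theorem \ref{dedthm} combines the three preceding results, so my plan is to assemble it rather than prove anything genuinely new. The core identification I would rely on is the isomorphism $[-]: \Z(\Max(D)) \longrightarrow \F(D)$ of Proposition \ref{dedlemma}(4), which transports the intersection-and-containment structure of $\F(D)$ to the $\inf$-and-$\preceq$ structure on $\Z(\Max(D))$. The strategy is to match up the abstract conditions in Lemma \ref{lem1} with the closure-operator machinery in Lemma \ref{lem2}, then push everything across $[-]$ into $\M(\F(D))$ using Proposition \ref{mprop}.

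First I would establish statement (1). The final sentence of Lemma \ref{lem1} says that, for $S = \Max(D)$, a subset $\XXX \subseteq \Z(\Max(D))$ satisfies conditions (1)--(3) of that lemma (equivalently $(1)\wedge(2)$, equivalently $(1)\wedge(3)$) if and only if $[\XXX] = \F(D)^\star$ for some semistar operation $\star$. By Lemma \ref{lem2}(3), $\XXX^\dagger = (\XXX^\preceq)^{\inf}$ is the smallest superset $\mathcal{W}$ of $\XXX$ closed under both $\preceq$ (i.e. $\mathcal{W}^\preceq \subseteq \mathcal{W}$, which is Lemma \ref{lem1}(2)) and $\inf$ (i.e. $\mathcal{W}^{\inf} \subseteq \mathcal{W}$, which is Lemma \ref{lem1}(1)). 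Thus $\XXX^\dagger$ is the smallest superset of $\XXX$ satisfying $(1)\wedge(2)$, and by the Lemma \ref{lem1} equivalence this is exactly the smallest superset $\mathcal{W}$ with $[\mathcal{W}] = \F(D)^\star$ for some $\star$. That is statement (1). Statement (2) is immediate from Lemma \ref{lem2}(4) with $S = \Max(D)$, and statement (3) is exactly the characterization of fixed points of the closure operation $\dagger$ in Lemma \ref{lem2}(3): $\XXX = \XXX^\dagger$ iff $\XXX^\preceq \subseteq \XXX$ and $\XXX^{\inf} \subseteq \XXX$.

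For statement (4) I would argue that $[-]$ restricts to a bijection between the fixed-point set $(2^{\Z(\Max(D))})^\dagger$ and $\M(\F(D))$. By statement (3), $\XXX \in (2^{\Z(\Max(D))})^\dagger$ iff $\XXX$ satisfies conditions (1) and (2) of Lemma \ref{lem1}; by the last clause of Lemma \ref{lem1} these (equivalently, (1) and (3)) hold iff $[\XXX] = \F(D)^\star$ for some semistar operation, and the sets $\F(D)^\star$ are precisely the elements of $\M(\F(D))$ by definition of $\M$ and Proposition \ref{mprop}. Since $[-]$ is an order isomorphism of near multiplicative lattices, its induced map on power sets $\XXX \longmapsto [\XXX]$ preserves inclusion in both directions, so the restriction is a poset isomorphism. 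The explicit formula for the isomorphism is just the defining action $[f] = \bigcap_{\ppp} \ppp^{-f(\ppp)} D_\ppp$ applied elementwise, and its inverse is the elementwise version of $I \longmapsto (\ppp \longmapsto -v_\ppp(I))$ from Proposition \ref{dedlemma}(4).

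The only point requiring genuine care, and hence the main obstacle, is checking that $[-]$ really does induce a well-defined \emph{bijection} between the two fixed-point sets and not merely an injection into $2^{\F(D)}$ with image contained in $\M(\F(D))$. Surjectivity onto $\M(\F(D))$ needs the ``only if'' direction of the Lemma \ref{lem1} equivalence: given any $\CCC \in \M(\F(D))$, its preimage $\XXX = [-]^{-1}(\CCC)$ must satisfy conditions (1) and (3), which is precisely what that lemma guarantees once one knows $\CCC = \F(D)^\star$. I would also double-check that the $\inf$ on the $\Z(\Max(D))$ side corresponds under $[-]$ to the intersection $\bigcap$ on the $\F(D)$ side — this is where the ``$\inf \YYY \in \Z(S)$'' proviso interacts with the requirement that intersections of nonzero modules may be $(0)$; Proposition \ref{dedlemma}(1) ($f \in \Z(\Max(D))$ iff $[f] \neq (0)$) is exactly what reconciles the two, and I would cite it explicitly to close that gap.
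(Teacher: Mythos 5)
Your proposal is correct and follows essentially the same route as the paper, which gives no separate proof but simply states that the theorem is obtained by combining Proposition \ref{dedlemma} with Lemmas \ref{lem1} and \ref{lem2}; your matching of Lemma \ref{lem1}'s conditions (1) and (2) with the closure conditions $\mathcal{W}^{\inf} \subseteq \mathcal{W}$ and $\mathcal{W}^{\preceq} \subseteq \mathcal{W}$ of Lemma \ref{lem2}(3) is exactly the intended assembly. Your added care about surjectivity onto $\M(\F(D))$ and the correspondence of $\inf$ with intersection via Proposition \ref{dedlemma}(1) fills in details the paper leaves implicit.
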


By \cite[Proposition 13.8]{ell} the inverse of the poset anti-isomorphism $\SStar(D) \longrightarrow \M(\F(D))$ acting by $\star \longmapsto \F(D)^\star$ acts by $\mathcal{C} \longmapsto \bigwedge \{v(J): J \in \mathcal{C}\}$.  Thus we have the following.

\begin{corollary}\label{dedcor1}
For any Dedekind domain $D$, the poset $\SStar(D)$ of all semistar operations on $D$ is anti-isomorphic to the subposet
\begin{eqnarray*}
(2^{\Z(\Max(D))})^\dagger   & = & \left\{ (\XXX^{\preceq})^{\inf} : \XXX  \in 2^{\Z(\Max(D))} \right\}\\
& =  &\left\{\XXX \in 2^{\Z(\Max(D))}: \XXX^{\preceq} \subseteq \XXX \mbox{ and } \XXX^{\inf} \subseteq \XXX\right\} 
\end{eqnarray*}
of $2^{\Z(\Max(D))}$.  Explicitly, there is a poset anti-isomorphism
$$\SStar(D) \longrightarrow (2^{\Z(\Max(D))})^\dagger$$ acting by
$$\star \longmapsto \{\ppp \longmapsto -v_\ppp(I): I \in \F(D)^\star\}$$
 with inverse acting by
$$\XXX \longmapsto \bigwedge\left\{v\left(\bigcap_{\ppp \in \Max(D)} \ppp^{-f(\ppp)} D_\ppp \right): f \in \XXX\right\}.$$
\end{corollary}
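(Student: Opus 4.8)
The plan is to realize the claimed anti-isomorphism as a composite of maps already in hand. By Proposition \ref{mprop} the assignment $\star \longmapsto \F(D)^\star$ is a poset anti-isomorphism $\SStar(D) \longrightarrow \M(\F(D))$, and by Theorem \ref{dedthm}(4) the assignment $\XXX \longmapsto [\XXX]$ is a poset isomorphism $(2^{\Z(\Max(D))})^\dagger \longrightarrow \M(\F(D))$. Composing the former with the inverse of the latter produces a map $\SStar(D) \longrightarrow (2^{\Z(\Max(D))})^\dagger$, which is an anti-isomorphism because the composite of an anti-isomorphism with an isomorphism reverses order. Its inverse is the composite running the other way, namely $\XXX \longmapsto [\XXX]$ followed by the anti-isomorphism $\mathcal{C} \longmapsto \bigwedge\{v(J): J \in \mathcal{C}\}$ of \cite[Proposition 13.8]{ell}.

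Next I would extract the explicit formulas by tracing the explicit form of each factor. Starting from $\star$, the first map yields $\F(D)^\star \in \M(\F(D))$, and the inverse of the isomorphism of Theorem \ref{dedthm}(4)---which acts by $\mathcal{C} \longmapsto \{\ppp \longmapsto -v_\ppp(I): I \in \mathcal{C}\}$---then returns $\{\ppp \longmapsto -v_\ppp(I): I \in \F(D)^\star\}$, matching the stated forward map. Running the inverse composite on a set $\XXX$, Theorem \ref{dedthm}(4) first produces $[\XXX] = \{\bigcap_{\ppp \in \Max(D)} \ppp^{-f(\ppp)}D_\ppp : f \in \XXX\}$, and then $\mathcal{C} \longmapsto \bigwedge\{v(J): J \in \mathcal{C}\}$ returns $\bigwedge\{v(\bigcap_{\ppp \in \Max(D)} \ppp^{-f(\ppp)}D_\ppp) : f \in \XXX\}$, matching the stated inverse.

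It remains to justify the two displayed descriptions of $(2^{\Z(\Max(D))})^\dagger$. By definition this set is the image $\{(\XXX^\preceq)^{\inf}: \XXX \in 2^{\Z(\Max(D))}\}$ of the closure operation $\dagger$ of Theorem \ref{dedthm}(2); since the image of a closure operation equals its set of fixed elements, this equals $\{\XXX: \XXX^\dagger = \XXX\}$, and Theorem \ref{dedthm}(3) identifies the latter with $\{\XXX: \XXX^\preceq \subseteq \XXX \mbox{ and } \XXX^{\inf} \subseteq \XXX\}$.

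Since each ingredient is already established, I do not anticipate a substantive obstacle: the proof is an assembly argument. The only point requiring care is orientation and bookkeeping---verifying that the composite is formed in the order that makes it an anti-isomorphism rather than an isomorphism, and that the explicit formulas coincide exactly rather than merely up to the identification $\F(D) \cong \Z(\Max(D))$ of Proposition \ref{dedlemma}.
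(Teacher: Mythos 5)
Your proposal is correct and matches the paper's own derivation: the paper obtains the corollary exactly by composing the anti-isomorphism $\star \longmapsto \F(D)^\star$ of Proposition \ref{mprop} (with inverse $\mathcal{C} \longmapsto \bigwedge\{v(J): J \in \mathcal{C}\}$ from \cite[Proposition 13.8]{ell}) with the isomorphism of Theorem \ref{dedthm}(4), and the two displayed descriptions of $(2^{\Z(\Max(D))})^\dagger$ follow from Theorem \ref{dedthm}(2),(3) just as you say. Your write-up simply makes explicit the bookkeeping the paper leaves to the reader.
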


\section{Consequences of the construction}\label{sec:con}

The results in this section yield Theorem \ref{pid} and \ref{mainthm} of the introduction.

Let $X$ be a poset.  The set $\C(X)$ of closure operations on $X$ is partially ordered by the obvious relation.   The map $\C(X) \longrightarrow 2^X$ acting by $\star \longmapsto X^\star$ is an anti-embedding of $\C(X)$ in $2^X$.  We will denote the image $\{X^\star: \star \in \C(X)\}$ of this anti-embedding by $\Moore(X)$.  In particular, the poset $\C(X)$ is anti-isomorphic to the poset $\Moore(X)$.  

Let $S$ be a set.  By \cite[Corollary 4.7(1)]{ell}, one has
$$\Moore(2^S) = \{\YYY \subseteq 2^S : \YYY \mbox{ is closed under arbitrary intersections}\}.$$  ($\Moore(2^S)$ is the set of all {\it Moore families on $S$} \cite{col}.) For any $X \subseteq S$, define $\iota_X \in \Z(S)$ as follows:
$$\iota_X(x) = \left\{\begin{tabular}{ll} $\infty$ & if $x \in X$ \\
		$0$ & otherwise.
 \end{tabular}\right.$$
Then $$\{\iota_X\}^\dagger = \{f \in \Z(S): f(x) = \infty \mbox{ iff } x \in X, \mbox{ and } f(x) = 0 \mbox{ for almost all } x \in S\setmin X\}.$$  We let $\iota(X) = \{\iota_X\}^\dagger$, and for any $\YYY \subseteq 2^{S}$ we let 
\begin{eqnarray*}
\iota(\YYY) & = & \bigcup_{X \in \YYY} \iota(X) \\
  & = & \{f \in \Z(S): f^{-1}(\infty) \in \YYY \mbox{ and } f(x) = 0 \mbox{ for almost all } x \in S\setmin f^{-1}(\infty)\}.
\end{eqnarray*}

\begin{lemma}\label{dedembed}
Let $S$ be a set.
\begin{enumerate}
\item The map $\iota: 2^{2^S} \longrightarrow 2^{\Z(S)}$ is an embedding of posets and has an order-preserving left inverse acting by $\XXX \longmapsto \{f^{-1}(\infty): f \in \XXX\}$.
\item The map $\iota: \Moore(2^S) \longrightarrow (2^{\Z(S)})^\dagger$ is an embedding of posets and has an order-preserving left inverse acting by $\XXX \longmapsto \{f^{-1}(\infty): f \in \XXX\}$.
\item  If $S$ is finite, then the map $\iota: \Moore(2^S) \longrightarrow (2^{\Z(S)})^\dagger$ is a poset isomorphism, and one has $\iota(\YYY) = \{f \in \Z(S): f^{-1}(\infty) \in \YYY\}$ for all $\YYY \in \Moore(2^{S})$.
\end{enumerate}
\end{lemma}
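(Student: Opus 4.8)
The plan is to deduce all three parts from the single identity $\rho \circ \iota = \id$, where $\rho$ denotes the candidate left inverse $\XXX \mapsto \{f^{-1}(\infty) : f \in \XXX\}$, together with the observation that both $\iota$ and $\rho$ are order-preserving. First I would prove (1). That $\iota$ is order-preserving is immediate from $\iota(\YYY) = \bigcup_{X \in \YYY}\iota(X)$, and $\rho$ is order-preserving because a larger family of functions has a larger set of infinity-fibers. The one computation to carry out is $\rho(\iota(\YYY)) = \YYY$: the inclusion $\subseteq$ holds because every $f \in \iota(\YYY)$ satisfies $f^{-1}(\infty) \in \YYY$ by the explicit description of $\iota(\YYY)$, and $\supseteq$ holds because each $X \in \YYY$ is realized as $\iota_X^{-1}(\infty)$ with $\iota_X \in \iota(X) \subseteq \iota(\YYY)$. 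Injectivity of $\iota$ and the left-inverse property then follow formally, and order-reflection (the implication $\iota(\YYY_1) \subseteq \iota(\YYY_2) \Rightarrow \YYY_1 \subseteq \YYY_2$) follows by applying the order-preserving $\rho$; this gives the poset embedding of (1).

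For (2) the work is to corestrict the two maps. The key step is to show that $\rho$ carries $(2^{\Z(S)})^\dagger$ into $\Moore(2^S)$, i.e.\ that $\rho(\XXX)$ is closed under arbitrary intersection whenever $\XXX$ is $\dagger$-closed. Given $f_\lambda \in \XXX$ with $f_\lambda^{-1}(\infty) = X_\lambda$, I would replace each $f_\lambda$ by its truncation $f_\lambda \vee 0$; this is nonnegative, hence lies in $\Z(S)$, has the same infinity-fiber $X_\lambda$, and satisfies $f_\lambda \vee 0 \preceq f_\lambda$ because $f_\lambda \geq 0$ off a finite set. Thus $f_\lambda \vee 0 \in \XXX^\preceq \subseteq \XXX$, and $g := \inf_\lambda (f_\lambda \vee 0)$ is again nonnegative, so $g \in \Z(S)$ and $g \in \XXX^{\inf} \subseteq \XXX$, with $g^{-1}(\infty) = \bigcap_\lambda X_\lambda$. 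Hence $\bigcap_\lambda X_\lambda \in \rho(\XXX)$, the empty intersection $S$ arising from the top element $\inf \emptyset \in \XXX^{\inf}$. In the other direction, the Moore hypothesis on $\YYY$ is exactly what keeps every infinity-fiber arising from $\iota(\YYY)$ inside $\YYY$: passing to $\cdot^{\preceq}$ preserves infinity-fibers and passing to $\cdot^{\inf}$ intersects them, so $\rho(\iota(\YYY)^\dagger) = \YYY$. Together with (1) this yields the embedding into $(2^{\Z(S)})^\dagger$ with left inverse $\rho$.

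For (3) I would exploit the collapse of the preorder $\preceq$ when $S$ is finite. Since ``for almost all $x$'' is vacuous on a finite set, $f \preceq g$ reduces to the single condition $f^{-1}(\infty) = g^{-1}(\infty)$, and likewise the clause ``$f = 0$ for almost all $x \in S \setminus f^{-1}(\infty)$'' becomes automatic; thus $\iota(\YYY) = \{f \in \Z(S) : f^{-1}(\infty) \in \YYY\}$. This set is a union of full $\preceq$-classes and is closed under $\inf$ by the Moore property (the infinity-fiber of an infimum is the intersection of the infinity-fibers), so $\iota(\YYY) \in (2^{\Z(S)})^\dagger$ and $\iota$ is genuinely defined, not merely after $\dagger$-closure. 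Surjectivity, hence the isomorphism, is the remaining point: given $\dagger$-closed $\XXX$, for any $f$ with $f^{-1}(\infty) \in \rho(\XXX)$ there is $g \in \XXX$ with $g^{-1}(\infty) = f^{-1}(\infty)$, whence $f \preceq g$ and $f \in \XXX^\preceq \subseteq \XXX$; this gives $\XXX = \iota(\rho(\XXX))$, so $\iota$ and $\rho$ are mutually inverse.

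The main obstacle is the interaction between the inf-closure condition and the ``finitely many exceptions'' built into $\preceq$ and into the definition of $\iota(\YYY)$, which is genuinely delicate for infinite $S$. The Moore hypothesis controls only the infinity-fibers, through intersection-closure, while the values off the infinity-fiber must be handled separately; this is what forces the truncation argument for $\rho$ and, more seriously, what breaks surjectivity of the raw $\iota$ for infinite $S$, since infima of functions with distinct infinity-fibers can fail the almost-everywhere-zero clause. I therefore expect verifying $\rho(\iota(\YYY)^\dagger) = \YYY$ --- that $\dagger$-closure never enlarges the collection of infinity-fibers beyond $\YYY$ --- to be the step requiring the most care.
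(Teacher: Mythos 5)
Your parts (1) and (3) track the paper's own proof almost exactly; in the first half of (2) your truncation $f\vee 0$ plays the role of the paper's $\iota_{f^{-1}(\infty)}$ (both are nonnegative minorants of $f$ under $\preceq$ with the same infinity-fiber), and both arguments correctly show that the left inverse $\rho\colon \XXX\longmapsto\{f^{-1}(\infty):f\in\XXX\}$ carries $(2^{\Z(S)})^\dagger$ into $\Moore(2^S)$. Where you diverge is the other half of (2): the lemma asserts that $\iota$ \emph{itself} maps $\Moore(2^S)$ into $(2^{\Z(S)})^\dagger$, i.e.\ that $\iota(\YYY)$ is already $\dagger$-closed for every Moore family $\YYY$, whereas you only prove $\rho(\iota(\YYY)^\dagger)=\YYY$, which is a statement about the modified map $\YYY\longmapsto\iota(\YYY)^\dagger$. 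Measured against the literal statement this is a gap: you never verify $\iota(\YYY)^{\inf}\subseteq\iota(\YYY)$.

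That missing inclusion is, however, false for infinite $S$, so your caution is warranted and your formulation is the provable one. Take $S=\NN$ and $\YYY=2^{\NN}$, and for each $n$ let $f_n(x)=\infty$ for $x\neq n$ and $f_n(n)=1$. Then $f_n\in\iota(\NN\setminus\{n\})\subseteq\iota(\YYY)$, but $\inf_n f_n$ is the constant function $1$, which lies in $\Z(\NN)$ and yet is not in $\iota(\emptyset)$ (it is nowhere $0$), hence not in $\iota(\YYY)$. This also refutes the step ``$\inf_\lambda f_\lambda\in\iota\left(\bigcap_\lambda X_\lambda\right)$'' in the paper's proof of (2), so the paper's argument breaks at precisely the point you sidestep. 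Your weaker conclusion --- that $\YYY\longmapsto\iota(\YYY)^\dagger$ is an order-embedding of $\Moore(2^S)$ into $(2^{\Z(S)})^\dagger$ with order-preserving left inverse $\rho$ --- is correct and suffices for everything the lemma feeds into: the lower bound $|\SStar(D)|\geq|\Moore(2^{\Max(D)})|$ in the cardinality corollary, and the finite case, where your part (3) correctly shows that $\iota(\YYY)$ genuinely is $\dagger$-closed because the almost-everywhere clauses become vacuous. You half-noticed the problem in your closing paragraph, but you attributed the failure of the almost-everywhere-zero clause under infima only to the loss of surjectivity of $\iota$; in fact it already defeats the well-definedness of the corestriction $\iota\colon\Moore(2^S)\longrightarrow(2^{\Z(S)})^\dagger$ as stated.
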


\begin{proof}
Statement (1) is clear.  To prove (2), first note that if $f \in \iota(X)$ and $g \preceq f$, where $X \subseteq S$ and $g \in \Z(S)$, then $g \in \iota(X)$; and if $f_\lambda \in \iota(X_\lambda)$ for all $\lambda$ in some indexing set, where each $X_\lambda \subseteq S$ and $\inf_\lambda f_\lambda \in \Z(S)$, then $\inf_\lambda f_\lambda \in \iota
\left(\bigcap_\lambda X_\lambda \right)$.  This shows that $\iota$ maps $\Moore(2^S)$ into $(2^{\Z(S)})^\dagger$.  Next, if $\XXX \in (2^{\Z(S)})^\dagger$ and $f \in \XXX$, then $\iota_{f^{-1}(\infty)} \preceq f$, whence $\iota_{f^{-1}(\infty)} \in \XXX$.  Therefore, if $\YYY \subseteq \XXX$, then $\inf_{f \in \YYY} \iota_{f^{-1}(\infty)} = \iota_Y \in \XXX$, where $Y = {\bigcap_{f \in \YYY} f^{-1}(\infty)}$, so ${\bigcap_{f \in \YYY} f^{-1}(\infty)} \in \{f^{-1}(\infty): f \in \XXX\}$.  This shows that the left inverse of $\iota$ maps $(2^{\Z(S)})^\dagger$ into $\Moore(2^S)$ and therefore completes the proof of statement (2).  Finally, suppose that $S$ is finite.  Then for any $f \in \Z(S)$ one has $f \in \iota(f^{-1}(\infty))$.  Therefore, if $\XXX \in (2^{\Z(S)})^\dagger$, then $\XXX = \bigcup_{f \in \XXX} \iota(f^{-1}(\infty)) = \iota(\{f^{-1}(\infty): f \in \XXX\})$, which proves that the two maps in statement (2) are inverses of each other in this case. 
\end{proof}

Combining Lemma \ref{dedembed} above with Theorem \ref{dedthm} and Corollary \ref{dedcor1}, we obtain Theorem \ref{pid} of the introduction.  We also have the following.

\begin{corollary}\label{dedcard}
Let $D$ be a Dedekind domain.   If $\Max(D)$ is infinite, then the set $\SStar(D)$ of all semistar operations on $D$ has cardinality $2^{2^{|\Max(D)|}}$.
\end{corollary}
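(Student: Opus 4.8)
The plan is to establish the upper and lower cardinality bounds separately and check they coincide. For the upper bound, I would invoke Corollary \ref{dedcor1}, which shows $\SStar(D)$ is anti-isomorphic to the subposet $(2^{\Z(\Max(D))})^\dagger \subseteq 2^{\Z(\Max(D))}$. Since $\Max(D)$ is infinite, the near multiplicative lattice $\Z(\Max(D))$ consists of functions $\Max(D) \longrightarrow \ZZ[\infty]$ with finite negative support, so $|\Z(\Max(D))| = |\Max(D)|$ (there are $|\Max(D)|$ choices for the infinite fibre, finitely many values assigned off it, and the rest forced to finitely many exceptions). Hence $|\SStar(D)| = |(2^{\Z(\Max(D))})^\dagger| \leq |2^{\Z(\Max(D))}| = 2^{|\Z(\Max(D))|} = 2^{2^{|\Max(D)|}}$.

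For the lower bound, the key idea is to embed a sufficiently large poset into $(2^{\Z(\Max(D))})^\dagger$ using the machinery already developed. I would apply Lemma \ref{dedembed}(2), which gives an order-embedding $\iota: \Moore(2^{\Max(D)}) \longrightarrow (2^{\Z(\Max(D))})^\dagger$. In particular this yields $|\SStar(D)| \geq |\Moore(2^{\Max(D)})|$, so it suffices to show that the number of Moore families on the set $S = \Max(D)$ is at least $2^{2^{|S|}}$. Writing $\kappa = |S|$, a Moore family is a subset of $2^S$ closed under arbitrary intersections, equivalently a subset containing $S$ itself and closed under intersections; the standard way to produce many such is to observe that any up-set (family closed under taking supersets) consisting of subsets of size $\geq \kappa$ — or more cleanly, any collection of coatoms together with all larger sets — is closed under intersection once we throw in $S$. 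The cleanest route: the collection of all subsets of $2^S$ of the form $\{S\} \cup \AAA$ where $\AAA$ is an arbitrary family of subsets each of which is an upper set closed under intersections already gives many, but the crispest count uses that the number of Moore families on a $\kappa$-element set is $2^{2^\kappa}$ for infinite $\kappa$, which one can see by noting that distinct families of \emph{coatoms} (sets of size $\kappa$ obtained by deleting one point, or any antichain of large cofinite-like sets) generate distinct intersection-closed families.

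Concretely, the simplest lower-bound witness: for each subset $\TT \subseteq 2^S$, consider the family generated by closing $\TT$ under intersection. To force these to be genuinely distinct and numerous, I would restrict to families $\TT$ consisting of sets that are pairwise incomparable and whose pairwise intersections all lie in a fixed "small" part of the lattice, so that recovering $\TT$ from its intersection closure is possible. Since $|2^S| = 2^\kappa$ and there is an antichain in $2^S$ of size $2^\kappa$ (for instance all subsets of cardinality $\kappa$, of which there are $2^\kappa$ when $\kappa$ is infinite), and since each such antichain set together with all of $2^S$ above a fixed threshold can be encoded independently, the number of intersection-closed families is $2^{2^\kappa}$. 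Thus $|\Moore(2^{S})| = 2^{2^{|S|}}$, giving $|\SStar(D)| \geq 2^{2^{|\Max(D)|}}$. Combined with the upper bound, this yields equality.

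The main obstacle I anticipate is the lower bound, specifically pinning down that the number of Moore families (intersection-closed subsets of $2^S$) is exactly $2^{2^{|S|}}$ for infinite $S$, rather than something smaller. The subtlety is that closing a family under intersection can collapse many distinct generating sets to the same Moore family, so a naive "take all subsets of $2^S$" overcounts. The fix is to exhibit a family of size $2^\kappa$ of \emph{independent} building blocks — an antichain whose elements have pairwise intersections confined to a region disjoint from the antichain — so that the intersection closure lets one read back exactly which blocks were included, yielding $2^{2^\kappa}$ genuinely distinct Moore families. Everything else (the upper bound and the cardinality computation $|\Z(\Max(D))| = |\Max(D)|$ for infinite $\Max(D)$) is routine set-theoretic bookkeeping.
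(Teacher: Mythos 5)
Your overall architecture matches the paper's: the upper bound comes from the anti-isomorphism of Corollary \ref{dedcor1} together with a count of $\Z(\Max(D))$, and the lower bound from the embedding $\iota: \Moore(2^{\Max(D)}) \longrightarrow (2^{\Z(\Max(D))})^\dagger$ of Lemma \ref{dedembed}(2). But there are two concrete problems. First, the count $|\Z(\Max(D))| = |\Max(D)|$ is wrong. An element of $\Z(S)$ is an arbitrary function $S \longrightarrow \ZZ[\infty]$ subject only to the finiteness of $f^{-1}(\ZZ_{<0})$; it is not determined by its infinite fibre plus finite data. Already the functions $\iota_X$ for $X \subseteq S$ give $2^{|S|}$ distinct elements, and $\aleph_0^{|S|} = 2^{|S|}$ gives the matching upper bound, so the correct value is $|\Z(\Max(D))| = 2^{|\Max(D)|}$. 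Your displayed chain silently substitutes this correct value at the last step (had you used $|\Z(\Max(D))| = |\Max(D)|$ consistently, your ``upper bound'' would be $2^{|\Max(D)|}$ and would contradict your own lower bound), so the slip is repairable, but as written it is an error.

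Second, the real content is the inequality $|\Moore(2^S)| \geq 2^{2^{|S|}}$ for infinite $S$, and you leave it as a sketch containing one false concrete claim: the family of all subsets of $S$ of cardinality $|S|$ is not an antichain (for $S = \NN$, the even numbers and the multiples of four are comparable). Your intended strategy---find an antichain $\{A_i\}$ of size $2^{|S|}$ in $2^S$ such that no $A_i$ is an intersection of the others, so that distinct subfamilies have distinct intersection-closures---does work, but you must actually exhibit such a family; for instance, split $S = S_1 \sqcup S_2$ with a bijection $\phi: S_1 \longrightarrow S_2$ and set $X_A = A \cup \phi(S_1 \setminus A)$ for $A \subseteq S_1$, so that $\bigcap_{A \in \mathcal{F}} X_A = \left(\bigcap \mathcal{F}\right) \cup \phi\left(S_1 \setminus \bigcup \mathcal{F}\right)$ equals some $X_C$ only when $\mathcal{F} = \{C\}$. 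Without some such explicit witness the lower bound is not proved. The paper sidesteps this construction entirely: it observes that $\Moore(2^S)$ is isomorphic under complementation to the poset $\UU(2^S)$ of union-closed families, that every ultrafilter on $S$ is upward closed and hence closed under arbitrary unions, and that an infinite set carries $2^{2^{|S|}}$ ultrafilters (Pospíšil's theorem). Your route is more elementary in that it avoids the ultrafilter count, but it needs the missing antichain to become a proof.
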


\begin{proof}
Note first that $\Z(\Max(D))$ has cardinality $\aleph_0^{|\Max(D)|} = 2^{|\Max(D)|}$.   Therefore, by Corollary \ref{dedcor1}, the set $\SStar(D)$ has cardinality at most $2^{2^{|\Max(D)|}}$.
On the other hand, by Theorem \ref{dedthm}(4) and Lemma \ref{dedembed}(2), the set $\SStar(D)$ has cardinality at least $|\Moore(2^{\Max(D)})|$.  Now, $\Moore(2^S)$ for any set $S$ is isomorphic to the poset
$$\UU(2^S) = \left\{\YYY \subseteq 2^S : \YYY \mbox{ is closed under arbitrary unions}\right\}.$$
Moreover, the poset $\UU(2^S)$ contains the set of all ultrafilters on the set $S$, which for any infinite set $S$ is known to have cardinality $2^{2^{|S|}}$.  Therefore $|\SStar(D)| \geq |\Moore(2^{\Max(D)})| = |\UU(2^{\Max(D)})| \geq 2^{2^{|\Max(D)|}}$, whence equalities hold.
\end{proof}

\begin{corollary}
The set of all semistar operations on $\ZZ$ has cardinality $2^{2^{\aleph_0}}$.
\end{corollary}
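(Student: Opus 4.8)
The plan is to recognize this final statement as an immediate specialization of Corollary~\ref{dedcard} to the particular Dedekind domain $D = \ZZ$. The ring $\ZZ$ is a PID, hence a Dedekind domain, and its set of maximal ideals $\Max(\ZZ)$ consists of the ideals $(p)$ for $p$ a prime number, together with the observation that there are countably infinitely many primes. Thus $|\Max(\ZZ)| = \aleph_0$ is infinite, and the hypothesis of Corollary~\ref{dedcard} is satisfied.

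First I would verify the two facts just mentioned: that $\ZZ$ is a Dedekind domain (being a PID, indeed the prototypical one, every nonzero prime ideal is maximal and the ring is Noetherian and integrally closed), and that $\Max(\ZZ)$ is countably infinite (Euclid's theorem on the infinitude of primes, together with the fact that there are only countably many integers). With these in hand, the conclusion is purely a matter of substituting $|\Max(\ZZ)| = \aleph_0$ into the cardinality formula supplied by Corollary~\ref{dedcard}. That corollary asserts $|\SStar(D)| = 2^{2^{|\Max(D)|}}$ whenever $\Max(D)$ is infinite, so here $|\SStar(\ZZ)| = 2^{2^{\aleph_0}}$, which is exactly the claimed value.

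There is essentially no obstacle: the entire content has already been carried by Corollary~\ref{dedcard}, whose proof handles both the upper bound (via the cardinality $\aleph_0^{|\Max(D)|} = 2^{|\Max(D)|}$ of $\Z(\Max(D))$ and Corollary~\ref{dedcor1}) and the lower bound (via the ultrafilter count inside $\UU(2^{\Max(D)})$). The only point worth a sentence of care is the passage from $|\Max(D)| = \aleph_0$ to $2^{2^{\aleph_0}}$, namely that $\aleph_0$ is genuinely infinite so that the ``infinite'' branch of Corollary~\ref{dedcard} applies, rather than the finite estimates of Theorem~\ref{mainthm}(3). Accordingly the proof is a one- or two-line corollary, and I would simply write: since $\ZZ$ is a Dedekind domain with $|\Max(\ZZ)| = \aleph_0$ infinite, Corollary~\ref{dedcard} gives $|\SStar(\ZZ)| = 2^{2^{\aleph_0}}$.
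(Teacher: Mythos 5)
Your proposal is correct and matches the paper's intent exactly: the corollary is stated immediately after Corollary~\ref{dedcard} with no separate proof, precisely because it is the specialization $D = \ZZ$, a Dedekind domain with $|\Max(\ZZ)| = \aleph_0$ infinite. Nothing further is needed.
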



Given also \cite[Table 1]{col} and \cite[Figure 1]{hig}, we obtain the following.

\begin{corollary}\label{pid2}
Let $D$ be a PID such that $\Max(D)$ is finite.
\begin{enumerate}
\item $\SStar(D)$ is finite and has cardinality given as in Table 1 for small values of $|\Max(D)|$.
\item If $|\Max(D)| = 2$, then $\SStar(D) \cong 2^{\{1,2,3\}}\setmin\{\{1\}\}$.
\item If $|\Max(D)| = 3$, then $\SStar(D)$ is anti-isomorphic to the lattice with Hasse diagram given as in \cite[Figure 1]{hig}.
\end{enumerate}
\end{corollary}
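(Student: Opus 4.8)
The plan is to obtain all three parts as bookkeeping consequences of Theorem \ref{pid}, which identifies $\SStar(D)$ with the lattice $\C(2^{\Max(D)})$ of closure operations on the Boolean lattice $2^{\Max(D)}$ and, dually, anti-identifies it with the lattice $\Moore(2^{\Max(D)})$ of Moore families on $\Max(D)$. For part (1), I would first note that since $\Max(D)$ is finite of cardinality $n$, the lattice $2^{\Max(D)}$ is finite and hence carries only finitely many closure operations; this gives finiteness of $\SStar(D)$ immediately. For the counts I would observe that $|\SStar(D)| = |\Moore(2^{\Max(D)})|$ depends only on $n$ and equals the number of Moore families (closure systems) on an $n$-element set; these are exactly the entries $2, 7, 61, 2480, \dots$ of Table 1, which I would cite from \cite[Table 1]{col} for $n \leq 7$.

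For part (2), I would specialize to $n = 2$ and enumerate by hand. Writing $\Max(D) = \{1,2\}$, a Moore family is a subset of the four-element lattice $2^{\{1,2\}}$ containing the top element $\{1,2\}$ and closed under intersection; a short enumeration yields exactly seven, namely $\{\{1,2\}\}$ together with the families obtained by adjoining a meet-closed selection from $\emptyset, \{1\}, \{2\}$ (note that including both $\{1\}$ and $\{2\}$ forces $\emptyset$). I would then compute the inclusion order on these seven families, read off its Hasse diagram, and exhibit an explicit order isomorphism between its dual — which by Theorem \ref{pid}(2) is $\SStar(D)$ — and the seven-element poset $2^{\{1,2,3\}}\setmin\{\{1\}\}$, where $\{1,2,3\}$ is an auxiliary three-element set. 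Concretely, matching bottoms and tops and tracking covering relations produces the bijection sending the full family to $\emptyset$ and the one-element family $\{\{1,2\}\}$ to $\{1,2,3\}$, with the five intermediate families sent to the remaining subsets; I would then verify that covers are preserved in both directions.

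For part (3), I would take $n = 3$ and invoke Theorem \ref{pid}(2) directly: $\SStar(D)$ is anti-isomorphic to the lattice $\Moore(2^{\{1,2,3\}})$ of the $61$ Moore families on a three-element set. It then remains only to recognize this lattice as the one drawn in \cite[Figure 1]{hig}; I would check that the Hasse diagram there is precisely that of $\Moore(2^{\{1,2,3\}})$ (the $n=3$ closure-system lattice), so that the anti-isomorphism supplied by Theorem \ref{pid}(2) yields the stated anti-isomorphism.

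The main obstacle I anticipate is not a deep argument — the substance is carried entirely by Theorem \ref{pid}, and the enumeration and figure are available in \cite{col} and \cite{hig} — but rather getting the orientation right in the small cases. Because the passage to Moore families is order-reversing, I must dualize carefully when asserting "anti-isomorphic'' in parts (2) and (3); and in part (2) I must verify the full order structure, not merely the cardinality $7$, in order to pin down the isomorphism type as $2^{\{1,2,3\}}\setmin\{\{1\}\}$ rather than some other seven-element lattice.
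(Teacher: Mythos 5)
Your proposal is correct and follows essentially the same route as the paper, which derives the corollary directly from Theorem \ref{pid} together with the count of Moore families on an $n$-element set from \cite[Table 1]{col} and the Hasse diagram in \cite[Figure 1]{hig}; your extra hand enumeration of the seven Moore families on a two-element set and the explicit matching with $2^{\{1,2,3\}}\setmin\{\{1\}\}$ is just a more detailed writing-out of what the paper leaves implicit.
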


It is known that, if $X_n$ is a set of cardinality $n$ for  every positive integer $n$, then 
$$|\Moore(2^{X_n})| = 2^{{n \choose [n/2]}f(n)},$$
where $f(n) \geq 1$ for all $n$ and $f(n) = 1 + O(n^{-1/4} \log_2 n)$ as $n \longrightarrow \infty$ and therefore $\lim_{n \rightarrow \infty} f(n) = 1$ \cite[Theorem 1]{Ale}.  Thus we have the following.

\begin{corollary}
Let $D$ be a Dedekind domain with quotient field $K$, and let $K \supsetneq D_1 \supsetneq D_2 \supsetneq D_3 \supsetneq \cdots$ be an infinite descending chain of overrings of $D$ such that $m_n = |\Max(D_n)| < \infty$ for all $n$.  Then we have the following.
\begin{enumerate}
\item There is a unique strictly ascending sequence $X_1 \subsetneq X_2 \subsetneq X_2 \subsetneq \cdots$ of subsets of $\Max(D)$ such that $D_n = U_n^{-1}D$, where $U_n = D \setmin \bigcup X_n$.
\item One has $m_n = |X_n|$ for all $n$, and therefore $m_n$ is a strictly increasing sequence of positive integers.
\item If $s_n = |\SStar(D_n)|$ for all $n$, then $s_n = 2^{{m_n \choose [m_n/2]}f(n)}$, where $f(n) \geq 1$ for all $n$ and $f(n)  = 1 + O(m_n^{-1/4} \log_2 m_n)$ as $n \longrightarrow \infty$, and therefore $\log_2 s_n \sim {m_n \choose [m_n/2]}$ as $n \longrightarrow \infty$.
\end{enumerate}
\end{corollary}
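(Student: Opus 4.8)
The plan is to read off parts (1) and (2) from the classical structure theory of overrings of a Dedekind domain, and then to reduce the counting in part (3) to the known asymptotics for Moore families by observing that each $D_n$ is a semilocal PID. For (1) and (2) I would use that every overring $R$ with $D \subseteq R \subseteq K$ is recovered from its surviving primes as $R = \bigcap_{\ppp \in Y} D_\ppp$ with $Y = \{\ppp \in \Max(D) : R \subseteq D_\ppp\}$, and that $Y \longmapsto \bigcap_{\ppp \in Y} D_\ppp$ is an inclusion-reversing bijection from $2^{\Max(D)}$ onto the set of overrings of $D$, under which $\Max(R)$ corresponds to $Y$ by contraction to $D$. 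Applying this to $R = D_n$ and putting $X_n = \{\ppp \in \Max(D) : D_n \subseteq D_\ppp\}$, uniqueness of the $X_n$ is immediate since $X_n$ is intrinsic to $D_n$; the inclusions $D_{n+1} \subsetneq D_n$ translate into $X_n \subsetneq X_{n+1}$ under the inclusion-reversing bijection; and the prime correspondence gives $m_n = |X_n|$, a strictly increasing sequence of positive integers (positive because $D_1 \subsetneq K$ forces $X_1 \neq \emptyset$). To realize $D_n$ as $U_n^{-1}D$ with $U_n = D \setmin \bigcup X_n$, I would compute the surviving primes of $U_n^{-1}D$ to be the maximal $\ppp$ with $\ppp \subseteq \bigcup X_n$; since $X_n$ is finite, prime avoidance forces $\ppp \in X_n$, so $U_n^{-1}D$ has surviving primes exactly $X_n$ and hence equals $D_n$ by the bijection.

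For part (3), note that $D_n = U_n^{-1}D$ is a localization of the Dedekind domain $D$, hence itself Dedekind, and it has only the $m_n$ maximal ideals indexed by $X_n$; a Dedekind domain with finitely many maximal ideals is a PID. Theorem \ref{pid} therefore applies and gives $s_n = |\SStar(D_n)| = |\Moore(2^{\Max(D_n)})|$; since the cardinality of $\Moore(2^{T})$ for a finite set $T$ depends only on $|T|$, and $|\Max(D_n)| = m_n = |X_n|$, this equals $|\Moore(2^{X_n})|$. Substituting the cardinality $m_n$ into the cited estimate \cite[Theorem 1]{Ale} then yields $s_n = 2^{{m_n \choose [m_n/2]} f(n)}$, where $f(n)$ denotes the value at $m_n$ of the function from that theorem, so that $f(n) \geq 1$ and $f(n) = 1 + O(m_n^{-1/4} \log_2 m_n)$. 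As the $m_n$ strictly increase we have $m_n \to \infty$, whence $f(n) \to 1$ and $\log_2 s_n = {m_n \choose [m_n/2]} f(n) \sim {m_n \choose [m_n/2]}$.

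The main obstacle is the localization step in part (1). For a general integral domain an overring need not be a ring of fractions at all, and even over a Dedekind domain the realization of $D_n$ as the single localization $U_n^{-1}D$ depends essentially on the finiteness of $X_n$: the key identity $\{\ppp \in \Max(D) : \ppp \subseteq \bigcup X_n\} = X_n$ rests on prime avoidance and can fail once $X_n$ is infinite, which is precisely where the hypothesis $m_n < \infty$ is used. The only remaining subtlety is bookkeeping the two distinct indices — the sequence index $n$ against the cardinality $m_n = |X_n|$ — when transporting the formula of \cite{Ale}, so that $f(n)$ is the composite of $n \mapsto m_n$ with the error factor of that theorem rather than a function of the sequence index directly.
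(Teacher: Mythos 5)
Your proposal is correct and follows essentially the route the paper intends: the paper gives no explicit proof, deducing the corollary from the Heinzer--Roitman classification of overrings of a Dedekind domain, the identification $\SStar(D_n) \cong$ (anti-isomorphic to) $\Moore(2^{\Max(D_n)})$ for a semilocal Dedekind (hence PID) $D_n$, and the cited asymptotic of Alekseev. Your filling-in of the localization step via prime avoidance and the index bookkeeping for $f(n)$ matches what the paper leaves implicit.
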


Note next that the poset $(2^{\Z(\Max(D))})^\dagger \cong \SStar(D)$ is constructed solely from the set $\Max(D)$.  Therefore, if $\Max(D)$ and $\Max(D')$ have the same cardinality for two Dedekind domains $D$ and $D'$, then the lattices $\SStar(D)$ and $\SStar(D')$ are isomorphic.   Regarding the converse, at least we can say this.  Consider the following statement.
\begin{enumerate}
\item[(P)] $|2^X| = |2^Y|$ implies $|X| = |Y|$ for all sets $X$ and $Y$.
\end{enumerate}
It is well-known that ZFC $\not \vdash$ P (if ZFC is consistent) and ZFC+GCH $\vdash$ P, where ZFC is Zermelo-Fraenkel set theory with the Axiom of Choice, and GCH is the Generalized Continuum Hypothesis (which in ZFC is equivalent to the statement that $Y \subseteq 2^X$ implies $|Y| \leq |X|$ or $|Y| = |2^X|$ for all sets $X$ and $Y$).  We have the following.

\begin{corollary}\label{equiv}
Consider the following conditions on Dedekind domains $D$ and $D'$.
\begin{enumerate}
\item $\Max(D)$ and $\Max(D')$ have the same cardinality.
\item The near multiplicative lattices $\F(D)$ and $\F(D')$ are isomorphic.
\item The lattices $\SStar(D)$ and $\SStar(D')$ are isomorphic.
\item  $\SStar(D)$ and $\SStar(D')$ have the same cardinality.
\end{enumerate}
One has $(1) \Leftrightarrow (2) \Rightarrow (3) \Rightarrow (4)$, and the four conditions are equivalent if either $\Max(D)$ or $\Max(D')$ is finite or if one assumes along with the axioms of ZFC the following statement: (P) $|2^X| = |2^Y|$ implies $|X| = |Y|$ for all sets $X$ and $Y$.  Moreover, the implication $(4) \Rightarrow (1)$ is equivalent in ZFC to the statement (P) and therefore can neither be proved nor disproved from ZFC if ZFC is consistent.
\end{corollary}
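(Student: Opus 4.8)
The plan is to prove the unconditional chain $(1)\Rightarrow(2)\Rightarrow(3)\Rightarrow(4)$ together with the reverse implication $(2)\Rightarrow(1)$, then to close the cycle back to $(1)$ under each of the two stated hypotheses, and finally to isolate the genuinely set-theoretic content of the claim about $(4)\Rightarrow(1)$. For the forward chain, $(1)\Rightarrow(2)$ is immediate: a bijection $\Max(D)\to\Max(D')$ relabels coordinates to give an isomorphism $\Z(\Max(D))\cong\Z(\Max(D'))$ of near multiplicative lattices, which by Proposition \ref{dedlemma}(4) yields $\F(D)\cong\F(D')$. For $(2)\Rightarrow(1)$ I would recover $|\Max(D)|$ as a lattice-theoretic invariant: under the isomorphism of Proposition \ref{dedlemma}(4) the group of units of the multiplicative monoid $\F(D)$ corresponds to the finitely supported integer-valued functions in $\Z(\Max(D))$, namely those $f$ with $f$ and $-f$ both in $\Z(\Max(D))$ and $f$ nowhere $\infty$, that is, to the free abelian group $\ZZ^{(\Max(D))}$ of rank $|\Max(D)|$. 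Since any isomorphism of near multiplicative lattices restricts to an isomorphism of unit groups, and the rank of a free abelian group is an isomorphism invariant, this forces $|\Max(D)|=|\Max(D')|$. The implication $(1)\Rightarrow(3)$ is the observation already recorded above: by Corollary \ref{dedcor1}, $\SStar(D)$ is anti-isomorphic to $(2^{\Z(\Max(D))})^\dagger$, a poset built solely from $|\Max(D)|$, so equal cardinalities of $\Max$ give isomorphic $\SStar$'s; and $(3)\Rightarrow(4)$ is trivial.

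To close the cycle when one of $\Max(D),\Max(D')$ is finite, I first note that $(4)$ forces both to be finite, since by Corollary \ref{dedcard} an infinite $\Max$ produces an infinite (indeed $2^{2^{|\Max|}}$-element) $\SStar$. For finite $\Max$ of size $n$, Theorem \ref{pid}(2) via Lemma \ref{dedembed}(3) gives $|\SStar(D)|=|\Moore(2^{X_n})|$, so it suffices to show that $n\mapsto|\Moore(2^{X_n})|$ is injective for $n\geq 1$ (the value $1$ at $n=0$ being distinct from all others). I would prove strict monotonicity by the explicit injection $\Moore(2^S)\hookrightarrow\Moore(2^{S\cup\{*\}})$ sending $\YYY\mapsto\YYY\cup\{A\cup\{*\}:A\in\YYY\}$, where $*$ is a new point: one checks directly that the image is closed under arbitrary intersections and that $\YYY$ is recovered as the members of the image not containing $*$. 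For $S\neq\varnothing$ this map is not onto, since every image necessarily contains the top $S$ of the original family, whereas the Moore family $\{\varnothing,\,S\cup\{*\}\}$ omits $S$. This yields $(4)\Rightarrow(1)$ whenever one side is finite, closing the cycle in that case.

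The remaining work is purely set-theoretic. Assuming $(P)$, if both $\Max$'s are infinite then $(4)$ reads $2^{2^\kappa}=2^{2^\lambda}$ with $\kappa=|\Max(D)|$ and $\lambda=|\Max(D')|$, and two applications of $(P)$ give first $2^\kappa=2^\lambda$ and then $\kappa=\lambda$; together with the finite case this proves $(4)\Rightarrow(1)$ and hence the equivalence of all four conditions under ZFC${}+{}$P. For the final meta-claim I would show that the schema ``$(4)\Rightarrow(1)$ for all Dedekind domains'' is equivalent in ZFC to $(P)$. One direction is the computation just given. For the converse I would first record that every infinite cardinal is realized as $|\Max(D)|$ for some Dedekind domain---indeed some PID, for instance $D=k[x]$ with $k$ an algebraically closed field of that cardinality, whose maximal ideals biject with $k$---so that the schema is equivalent to the cardinal statement $(Q)$: for all infinite $\kappa,\lambda$, $2^{2^\kappa}=2^{2^\lambda}$ implies $\kappa=\lambda$. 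The crux is then that $(Q)$ is equivalent to $(P)$: one has $(P)\Rightarrow(Q)$ by applying $(P)$ twice, while $(Q)\Rightarrow(P)$ because $2^\mu=2^\nu$ implies $2^{2^\mu}=2^{2^\nu}$, whence $(Q)$ returns $\mu=\nu$ (the finite and mixed cases of $(P)$ being trivial). Since $(P)$ holds under ZFC${}+{}$GCH but fails in suitable forcing extensions, as noted above, $(4)\Rightarrow(1)$ is independent of ZFC.

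I expect the main obstacle to be this last meta-equivalence---specifically the cardinal-arithmetic observation that the \emph{double} exponential in $|\SStar(D)|=2^{2^{|\Max(D)|}}$ loses no information, so that $(Q)$ is equivalent to $(P)$ itself rather than to some a priori weaker double-exponential cancellation principle, together with the need to verify that all infinite cardinals are genuinely realizable as $|\Max(D)|$. The strict monotonicity of the Moore-family counts is the only other point requiring an argument beyond a citation.
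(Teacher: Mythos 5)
Your proposal is correct and follows the same overall architecture as the paper's proof, but two sub-arguments differ enough to be worth noting. For $(2)\Rightarrow(1)$ the paper recovers $\Max(D)$ directly as the set of elements of $\F(D)$ lying immediately below the multiplicative identity $D$ (the coatoms under $D$), whereas you recover $|\Max(D)|$ as the rank of the unit group of the monoid $\F(D)$, which under Proposition \ref{dedlemma}(4) is the free abelian group $\ZZ^{(\Max(D))}$ of invertible fractional ideals; both are valid lattice-theoretic invariants, the paper's being slightly more direct (it recovers the set itself, not just its cardinality). For the finite case of $(4)\Rightarrow(1)$ the paper merely cites Theorem \ref{pid} and leaves implicit the fact that $n\longmapsto|\Moore(2^{X_n})|$ is injective; your explicit injection $\YYY\longmapsto\YYY\cup\{A\cup\{*\}:A\in\YYY\}$, shown to be non-surjective via the Moore family $\{\varnothing, S\cup\{*\}\}$, supplies a proof of strict monotonicity that the paper does not spell out (and that does not follow immediately from the asymptotic bounds quoted there), so this is a genuine improvement in rigor. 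Your reduction of the meta-claim to the cardinal statement $(Q)$ and the verification that $(Q)\Leftrightarrow(P)$ likewise makes explicit a step the paper compresses into one sentence; the realizability of every infinite cardinal as $|\Max(D)|$ for a PID is handled identically in both.
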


\begin{proof}
By Proposition \ref{dedlemma}(4) and \cite[Corollary 13.2]{ell} we have $(1) \Rightarrow (2) \Rightarrow (3) \Rightarrow (4)$.  Moreover, the elements of $\Max(D)$ can be recovered from the near multiplicative lattice $\F(D)$ as the elements lying immediately below the identity element $D$ of $\F(D)$.  Thus we have $(2) \Rightarrow (1)$.  Suppose, then, that condition $(4)$ holds.  If either $\Max(D)$ or $\Max(D')$ is finite, then from Corollary \ref{pid} it follows that $\Max(D)$ and $\Max(D')$ have the same (finite) cardinality.  Alternatively, if $\Max(D)$ and $\Max(D')$ are infinite and one assumes the statement (P), then since $2^{2^{|\Max(D)|}} = 2^{2^{|\Max(D')|}}$ by Corollary \ref{dedcard}, one has $|\Max(D)| = |\Max(D')|$.  Finally, since one can prove in ZFC that there is a PID $D$ with $|\Max(D)|$ equal to any given infinite cardinal $\aleph_a$ (namely, $D = k[T]$, where $k$ is an algebraic closure of $\QQ(\mathbf{X})$, where $|\mathbf{X}| = \aleph_a$), it follows that the implication $(4) \Rightarrow (1)$ is equivalent in ZFC to the statement (P) and therefore can neither be proved nor disproved from ZFC if ZFC is consistent.
\end{proof}

\begin{problem}
In Corollary \ref{equiv} does  (3) imply (2)?  Does (4) imply (3)?
\end{problem}

\section{The lattice of finite type semistar operations}

A semistar operation $\star$ on a domain $D$ is said to be of {\it finite type} if $I^\star = \bigcup\{J^*: J \subseteq I \textup{\ is finitely generated}\}$ for all $I \in \F(D)$.  By \cite[Proposition 13.6]{ell} the set $\SStar_f(D)$ of all finite type star operations on $D$ is a sublattice of $\SStar(D)$ that is closed under arbitrary suprema and finite infima; in particular it is a complete lattice.  In this section we construct the lattice $\SStar_f(D)$ for any  Dedekind domain $D$. As one might suspect this is a much smaller lattice than $\SStar(D)$ and is much easier to construct.

A closure operation $\star$ on a poset $S$ is said to be {\it finitary} if $(\bigvee \Delta)^\star = \bigvee (\Delta^\star)$ for all directed subsets $\Delta$ of $S$ for which $\bigvee \Delta$ exists.   A semistar operation on a domain $D$ is of finite type if and only if it is finitary as a closure operation on the lattice $\F(D)$. 

Let $M$ be a near multiplicative lattice.  By \cite[Proposition 5.3]{ell} the subset $\N_f(M)$ of $\N(M)$ consisting of all finitary nuclei on $M$ is closed under arbitrary suprema and in particular is a complete lattice.  Let $$\R(M) = \{x \in M: x^2 = x \mbox{ and } x \geq 1\}.$$  Then $\R(M)$ is also a near multiplicative lattice by \cite[Proposition 9.1]{ell}.  For any $a \in M$ we let $d_a: M \longrightarrow M$ be given by $x^{d_a} = a x$ for all $x \in M$.  Then $d_a$ is a finitary nucleus on $M$, and by \cite[Proposition 10.4]{ell} the map $d_-: \R(M) \longrightarrow \N_f(M)$ is a supremum-preserving poset embedding.

\begin{example}
If $D$ is an integral domain, then $\R(\F(D))$ is the lattice $\mathcal{O}(D)$ of all overrings of $D$ and $\N_f(\F(D))$ is the lattice $\SStar_f(D)$ of all finite type semistar operations on $D$.  One has $I^{d_A} = AI$ for any overring $A$ of $D$ and any $I \in \F(D)$. 
\end{example}

\begin{proposition}
Let $D$ be a Dedekind domain.
\begin{enumerate}
\item The map $\rho: 2^{\operatorname{Max}(D)} \longrightarrow \mathcal{O}(D)$ acting by
$\rho: X \longmapsto \bigcap_{\ppp \in X} D_\ppp$ is an anti-isomorphism of posets.
\item The map $d_- : \mathcal{O}(D) = \R(\F(D)) \longrightarrow \SStar_f(D) = \N_f(\F(D))$ is an isomorphism of posets.
\item The map $2^{\operatorname{Max}(D)} \longrightarrow \SStar_f(D)$ acting by $X \longmapsto d_{\rho(X)}$ is an anti-isomorphism of posets.
\end{enumerate}
\end{proposition}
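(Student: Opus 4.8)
\emph{Part (1).} The plan is to identify $\mathcal{O}(D)$ with the idempotents of $\Z(\Max(D))$ through the isomorphism $[-]\colon \Z(\Max(D)) \longrightarrow \F(D)$ of Proposition \ref{dedlemma}. Since $[-]$ is an isomorphism of near multiplicative lattices, it sends the identity of $\Z(\Max(D))$ (the constant function $0$) to the identity $D$ of $\F(D)$ and restricts to a poset isomorphism $\R(\Z(\Max(D))) \longrightarrow \R(\F(D)) = \mathcal{O}(D)$, the latter equality being the content of the preceding example. Now a function $f \in \Z(\Max(D))$ satisfies $f + f = f$ and $f \geq 0$ precisely when $f$ takes values in $\{0, \infty\}$, i.e.\ when $f = \iota_{f^{-1}(\infty)}$; hence $\R(\Z(\Max(D))) = \{\iota_X : X \subseteq \Max(D)\}$. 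A direct computation from the definition of $[-]$ gives $[\iota_X] = \bigcap_{\ppp \notin X} D_\ppp$, so that $\rho(Y) = [\iota_{\Max(D)\setminus Y}]$ for every $Y \subseteq \Max(D)$. Thus $\rho$ factors as the composite of the complementation anti-automorphism of $2^{\Max(D)}$, the isomorphism $X \longmapsto \iota_X$ onto $\R(\Z(\Max(D)))$, and the isomorphism $[-]$, and is therefore a poset anti-isomorphism onto $\mathcal{O}(D)$. (Alternatively one may argue classically, using that every overring of a Dedekind domain is an intersection of localizations, together with the weak approximation theorem to recover $X$ from $\rho(X)$.)

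\emph{Part (2).} By \cite[Proposition 10.4]{ell} the map $d_-$ is already a supremum-preserving poset embedding, so it suffices to prove surjectivity: every finitary nucleus $\star$ on $\F(D)$ is of the form $d_A$ for an overring $A$. The candidate is $A = D^\star$. First, the nucleus inequality $D^\star D^\star \subseteq (DD)^\star = D^\star$ together with $D \subseteq D^\star$ shows that $A = D^\star$ is a subring of $K$ containing $D$, i.e.\ $A \in \R(\F(D)) = \mathcal{O}(D)$. Next I would compute $I^\star$ for finitely generated $I \in \F(D)$. In a Dedekind domain such an $I$ is invertible, and then the nucleus inequality applied to $I^{-1} I = D$ gives $I^{-1} I^\star \subseteq (I^{-1})^\star I^\star \subseteq (I^{-1} I)^\star = A$; multiplying through by $I$ yields $I^\star = D I^\star = (I I^{-1}) I^\star \subseteq I A = A I$, while $A I = D^\star I \subseteq D^\star I^\star \subseteq (D I)^\star = I^\star$ gives the reverse inclusion. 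Hence $I^\star = A I = I^{d_A}$ for every finitely generated $I$. Finally, since both $\star$ and $d_A$ are of finite type, for an arbitrary $I \in \F(D)$ one has $I^\star = \bigcup \{J^\star : J \subseteq I \text{ finitely generated}\} = \bigcup_J A J = A I = I^{d_A}$, so that $\star = d_A$, as desired.

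\emph{Part (3)} is then immediate: the map $X \longmapsto d_{\rho(X)}$ is the composite of the anti-isomorphism $\rho$ of part (1) with the isomorphism $d_-$ of part (2), hence a poset anti-isomorphism $2^{\Max(D)} \longrightarrow \SStar_f(D)$. I expect the main obstacle to lie in the surjectivity step of part (2): the decisive input is the invertibility of finitely generated submodules of $K$ in a Dedekind domain, which is exactly what forces the abstract inequality $I^{-1} I^\star \subseteq A$ to collapse to the identity $I^\star = A I$ on finitely generated ideals. Once this is in hand, finiteness of type propagates the identity to all of $\F(D)$ essentially for free, and parts (1) and (3) are then formal.
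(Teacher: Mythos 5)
Your proof is correct, and it follows the same outline as the paper (establish (1) and (2), then compose for (3)), but it fills in both halves with self-contained arguments where the paper simply cites the literature. For (1) the paper invokes Heinzer--Roitman's result that every overring of a Dedekind domain is uniquely of the form $\bigcap_{\ppp \in X} D_\ppp$; you instead recover this from Proposition \ref{dedlemma} by transporting $\R(\F(D)) = \mathcal{O}(D)$ along the isomorphism $[-]$ and computing that the idempotents $\geq 0$ of $\Z(\Max(D))$ are exactly the functions $\iota_X$ with values in $\{0,\infty\}$ --- a nice internal derivation that makes the section independent of \cite{hei}. For (2) the paper cites \cite[Example 10.8]{ell}; you prove surjectivity of $d_-$ directly by showing that a finitary nucleus $\star$ satisfies $I^\star = D^\star I$ on finitely generated (hence invertible) $I$ via the inclusion $I^{-1}I^\star \subseteq (I^{-1}I)^\star = D^\star$, and then propagating to all of $\F(D)$ by finite type. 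Both steps are sound (the only facts you use beyond the paper's own results are that finitely generated nonzero $D$-submodules of $K$ are invertible fractional ideals and that $AI = \bigcup\{AJ : J \subseteq I$ finitely generated$\}$, both standard). What your route buys is transparency and independence from the two cited sources; what the paper's route buys is brevity. Either is acceptable.
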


\begin{proof}
By \cite[Proposition 3.14]{hei}, if $A$ is an overring of a Dedekind domain $D$, then there exists a unique subset $X$ of $\Max(D)$ such that $A =  \bigcap_{\ppp \in X} D_\ppp$.    Statement (1) follows.  Statement (2), then, follow from \cite[Example 10.8]{ell}, and statement (3) follows from statements (1) and (2).
\end{proof}

\begin{corollary}
One has $|\SStar_f(D)| = 2^{|\Max(D)|}$ for any Dedekind domain $D$.
\end{corollary}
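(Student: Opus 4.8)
The plan is to read the cardinality directly off the anti-isomorphism supplied by the immediately preceding proposition, so that essentially no new work is required. First I would invoke statement (3) of that proposition, which furnishes a poset anti-isomorphism
$$2^{\operatorname{Max}(D)} \longrightarrow \SStar_f(D), \qquad X \longmapsto d_{\rho(X)}.$$
Since an anti-isomorphism of posets is in particular a bijection of the underlying sets, it identifies $\SStar_f(D)$ with the power set $2^{\operatorname{Max}(D)}$ as sets, and hence $|\SStar_f(D)| = |2^{\operatorname{Max}(D)}|$.

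Second, I would simply note that by the definition of cardinal exponentiation the power set $2^{\operatorname{Max}(D)}$ has cardinality $2^{|\operatorname{Max}(D)|}$, whence $|\SStar_f(D)| = 2^{|\operatorname{Max}(D)|}$, as claimed. This chain is valid regardless of whether $\operatorname{Max}(D)$ is finite or infinite, since the bijection is genuine in either case.

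There is no real obstacle at the level of this corollary: all of the substance has already been carried out upstream. The genuine content lies in establishing the anti-isomorphism of the preceding proposition, which rests in turn on the classification of overrings of a Dedekind domain (the result of \cite[Proposition 3.14]{hei} identifying every overring $A$ with a unique $\bigcap_{\ppp \in X} D_\ppp$), on the anti-isomorphism $\rho$ between $2^{\operatorname{Max}(D)}$ and $\mathcal{O}(D) = \R(\F(D))$ that this classification yields, and on the fact that $d_-\colon \R(\F(D)) \longrightarrow \N_f(\F(D))$ is an isomorphism in the Dedekind case via \cite[Example 10.8]{ell}. Given those, the corollary is a one-line consequence, and the only thing to be careful about is to phrase the passage from the anti-isomorphism to the equality of cardinalities so that it is transparently unaffected by the order-reversal, i.e. to emphasize that it is the underlying bijection, not the order structure, that is being used.
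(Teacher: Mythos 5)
Your proposal is correct and matches the paper's intent exactly: the corollary is stated without proof because it follows immediately from part (3) of the preceding proposition, whose anti-isomorphism $2^{\Max(D)} \longrightarrow \SStar_f(D)$ is in particular a bijection. Your added care about the order-reversal being irrelevant to the cardinality count is sound but not a point of divergence.
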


\end{document}